\shorttitle{Mean field fluctuations of  loss systems under JSQ(d) load balancing}
\newcommand{\hide}[1]{\ifthenelse{\boolean{false}}{#1}{}}
\newcommand{\barr}{\begin{array}}
\newcommand{\earr}{\end{array}}
\newcommand{\benum}{\begin{enumerate}}
\newcommand{\eenum}{\end{enumerate}}
\newcommand{\bit}{\begin{itemize}}
\newcommand{\eit}{\end{itemize}}
\newcommand{\bmul}{\begin{multline}}
\newcommand{\emul}{\end{multline}}
\newcommand{\bdes}{\begin{description}}
\newcommand{\edes}{\end{description}}
\newcommand{\bfig}{\begin{figure}}
\newcommand{\efig}{\end{figure}}
\newcommand{\bemq}{\begin{quote} \begin{em}}
\newcommand{\eemq}{\end{em} \end{quote}}
\newcommand{\ie}{{i.e.}}
\newcommand{\expect}[1]{\mathbb{E}\left[{#1}\right]}
\newcommand{\bt}{\begin{theorem}}
\newcommand{\bl}{\begin{lemma}}
\newcommand{\bc}{\begin{claim}}
\newcommand{\bcoro}{\begin{corollary}}
\newcommand{\bres}{\begin{Result}}
\newcommand{\brem}{\begin{Remark}}
\newcommand{\et}{\end{theorem}}
\newcommand{\el}{\end{lemma}}
\newcommand{\ec}{\end{claim}}
\newcommand{\ecoro}{\end{corollary}}
\newcommand{\eres}{\end{Result}}
\newcommand{\erem}{\end{Remark}}
\newcommand{\beq}{\begin{equation}}
\newcommand{\eeq}{\end{equation}}
\newcommand{\balign}{\begin{align}}
\newcommand{\ealign}{\end{align}}
\newcommand{\nbeq}{\begin{equation*}}
\newcommand{\neeq}{\end{equation*}}
\newcommand{\nbalign}{\begin{align*}}
\newcommand{\nealign}{\end{align*}}
\newcommand{\UN}[1]{{\mathcal{V}}^{(N)}_{k}}
\newcommand{\norm}[1]{\|{#1}\|}
\newcommand{\abs}[1]{\left \vert {#1} \right \vert}
\newcommand{\mb}[1]{\mathbb{#1}}
\newcommand{\mf}[1]{\mathbf{#1}}
\newcommand{\mc}[1]{\mathcal{#1}}
\begin{document}
%\setlength{\abovedisplayskip}{3pt}
%\setlength{\belowdisplayskip}{3pt}
%\abovedisplayshortskip
%\belowdisplayshortskip
\title{Sensitivity of Mean-field Fluctuations in Erlang Loss Models with Randomized Routing}\\

\authorone[University of Massachusetts, Amherst]{Thirupathaiah Vasantam}
\addressone{College of Information and Computer Sciences, Amherst, MA 01003, USA. e-mail: tvasantam@cs.umass.edu}
\authortwo[University of Waterloo]{Ravi R. Mazumdar}
\addresstwo{Department of Electrical and Computer Engineering, 200 University Ave W, Waterloo, ON N2L 3G1, Canada. e-mail: mazum@uwaterloo.ca}

\begin{abstract}

In this paper, we study a large system of $N$  servers each with capacity to process at most $C$ simultaneous jobs and 
an incoming job is routed to a server if it has the lowest occupancy amongst $d$ (out of N) randomly selected servers. A job that is routed to a server with no vacancy is assumed to be blocked and lost. Such randomized policies are referred to JSQ(d) (Join the Shortest Queue out of $d$) policies.
Under the assumption that jobs arrive according to a Poisson process with rate $N\lambda^{(N)}$ where $\lambda^{(N)}=\sigma-\frac{\beta}{\sqrt{N}}$, $\sigma\in\mb{R}_+$ and $\beta\in\mb{R}$, we establish 
functional central limit theorems (FCLTs) for the fluctuation process in both the transient and stationary regimes when service time distributions are exponential. In particular, we show that the limit is an Ornstein-Uhlenbeck process whose mean and variance depend on the mean-field of the considered model. Using this, we obtain approximations to the blocking probabilities for large $N$, where we can precisely estimate the accuracy of first-order approximations. \end{abstract}

\keywords{
	Loss models;
	JSQ($d$);
	Halfin-Whitt regime;
	FCLT;
	Fluctuations;
	Mean-field.
}
\ams{60K35}{60F17;60M20;68M20}
	
%\IEEEkeywords{  Loss models; Power-of-$d$; mean-field; Global stability; Insensitivity; fixed-point} % NOT required for Proceedings

\section{Introduction}
\label{sec:introduction}

 This paper is motivated by the design of load balancing algorithms for cloud computing systems such as Microsoft's Azure \cite{Azure} and Amazon EC2 \cite{AmazonEC2}, where  Erlang-type loss models are the appropriate mathematical abstraction. The models of interest are large number $N$ of multi-server loss systems where each server has capacity to process at most $C$ jobs simultaneously,  where $C$ is independent of $N$. A job routed to a server will be accepted for service only if the occupancy (the number of progressing jobs) of the server is less than $C$, otherwise the routed job will be blocked from service and it is considered to be lost. If a routed job gets accepted at its destination, then its processing begins immediately at a constant unit rate until its service is completed. We assume that the service times are exponentially distributed with unit mean.

We focus on randomized routing policies where arriving jobs or tasks are routed to the server with the least number of jobs amongst $d$ servers chosen at random. These policies are referred to as JSQ(d) (Join the Shortest Queue out of $d$) policies. We consider the situation where the arrival rate to the system is given by a Poisson process with rate $N\lambda^{(N)}$ where $\lambda^{(N)}=\sigma-\frac{\beta}{\sqrt{N}}$, $\sigma\in\mb{R}_+$ and $\beta\in\mb{R}$. Thus $\lambda^{(N)}$ is a fluctuation in the arrival rate that is nominally $\sigma$. The objective in this paper is to study the sensitivity of the blocking probabilities under JSQ($d$). We also show that this result can then be exploited to obtain approximation errors.

A number of papers have studied the limiting behavior of blocking probabilities in Erlang loss models when $N\rightarrow \infty$ for both homogeneous systems \cite{xie,arpan} and heterogeneous models (servers with differing capacities) \cite{arpan2,Karthik}. When $N$ is infinite, the limiting distributions can be obtained via the mean-field limit of the empirical  occupancy distribution, an approach that goes back to the work in \cite{Vvedenskaya_inftran,mit} for $M/M/1$ models incorporating JSQ(d) and was popularized  as the {\em Power-of-Two} principle where it was argued that most of the gains in the average delay are obtained when taking $d=2$. The mean-field analysis also establishes the asymptotic statistical independence on path-space \cite{Graham_chaos} for the limiting occupancy processes. The results show that the blocking probability in the limiting system is very close to the theoretical lower bound on blocking that can be achieved by any non-anticipative policy and thus approximates the optimal desirable behavior\cite{Karthik}.

A key question is, how well does the mean-field limit describe the occupancy distribution and the blocking probabilities when $N$ is finite but large? Recently, there have been a number of papers \cite{Ying,gast} that have addressed this issue for $M/M/1$ queueing models for the $\beta=0$ case where the limiting stationary distribution can be characterized explicitly as a double-exponential distribution. They used an approach based on Stein's method and showed that the rate of convergence of the empirical occupancy distribution to the mean-field distribution is  $O(\frac{1}{\sqrt{N}})$. In \cite{gast} a refined  $O(\frac{1}{N})$ term is also given. These approaches use Stein's method and exponential stability of the underlying mean-field equation to study the mean-squared error between the empirical distribution and the mean-field limit to characterize the rate of convergence. In \cite{gast_fns}, similar proof techniques based on Stein's method have been used to show that for any twice differentiable bounded function $f(\cdot)$ the gap between $\mb{E}{[f(X^{(N)}(t))]}$ and $f(x(t))$ is $O(\frac{1}{N})$ in both transient and stationary regimes, where $X^{(N)}(t)$ and $x(t)$ are considered empirical distributions and the mean-field limit, respectively. The results of \cite{gast_fns} can be used for Erlang loss models for the special case $\beta=0$ to conclude that the error between the average blocking probability of the system with $N$ servers and the asymptotic blocking probability is $O(\frac{1}{N})$. The results of \cite{gast_fns} are however not applicable when $\beta\neq 0$ and are essentially weak convergence results.

% In the case of the loss model we consider in this paper the limiting distribution does not have a closed form and thus applying Stein's method is not straight forward.

Our approach is via the development of FCLTs for the variation of the empirical distribution around the mean-field that we term the {\em fluctuation process}. These limit theorems allow us to study both the transient and stationary fluctuations by showing convergence to an appropriate Ornstein-Uhlenbeck process whose drift and noise variance depend on the mean-field limit of the model. A by-product of our study of FCLTs is that the error between the average blocking probability of the system with $N$ servers and the asymptotic blocking probability is $O(\frac{1}{\sqrt{N}})$ for $\beta\neq 0$.

Recently, Eschenfeldt and Gamarnik\cite{gamarnik} also studied the FCLT scaling of the queue occupancy process for a system of $M/M/1$ queues in the diffusion limit with JSQ where they showed that asymptotically the distribution concentrates on queues having up to two customers. They do not consider the distributional aspects or the mean-field issues.

Clearly, $\lambda^{(N)}$ is a perturbation of $\sigma$ by $\frac{\beta}{\sqrt{N}}$. The Halfin-Whitt regime corresponds to the case when the offered load to the system is very close to the system capacity $NC$ and corresponds to the special case when we choose $\sigma=C$, \ie, jobs arrive according to a Poisson process with rate $N\lambda^{(N)}$ where $\lambda^{(N)}=C-\frac{\beta}{\sqrt{N}}$. This implies that for large $N$, $\frac{\lambda^{(N)}}{C}=1-\frac{\beta}{C\sqrt{N}}$ is very close to one implying that the system is critically loaded. We are interested in studying approximations to the blocking probability for the system when $N$ is large but a finite value when JSQ($d$) s used. %One way is to obtain the asymptotic results, i.e., as $N\to \infty$, and then estimating the error terms.

 The interest in the Halfin-Whitt regime is because there is a phase change in the behavior of the blocking probabilities going from exponential decrease (in C) to an $\frac{1}{\sqrt{C}}$ scaling as $C$ (see \cite{gazd}) becomes large for the case of uniform routing. It can be shown that an equivalent result is valid for the case of complete resource pooling when $N$ becomes large with fixed $C$, i.e., a loss system with a single server having arrival rate of $N\lambda^{(N)}$ with server capacity $NC$. Although loss systems are stable for any finite average load, this phase change in blocking behavior has implications for dimensioning the system.%%We now provide a brief overview of previous related works in terms of the issues that they address. In \cite{Turner_thesis}, the randomized routing schemes for homogeneous loss models considered in this paper were first studied by using mean-field techniques. However, existence and uniqueness of a fixed-point of the mean-field were not shown. The existence and uniqueness of a fixed-point of the mean-field for homogeneous loss model of \cite{Turner_thesis} was addressed in \cite{xie,arpan2,arpan} by considering heterogeneous systems with an appropriate modification to the JSQ(d) policy to account for server and job heterogeneity. In \cite{xie} the existence and uniqueness of a fixed-point was established under an asymptotic independence of servers ansatz while in \cite{arpan} the asymptotic independence of servers and that the interchange of limits (\ref{eq:exchange_limits}) was proved. In \cite{arpan2}, they also obtained a theoretical lower bound on the minimum average blocking achievable by any work conserving policy and then it was shown that the average blocking probability due to the SQ($d$) policy is very close to the theoretical lower bound. 

We first provide an overview of the system performance when the system is in the Halfin-Whitt regime. Clearly, the average blocking probability depends on how efficiently we use system resources. For example, let us consider the random routing case where an arrival is routed to a randomly selected server under the Halfin-Whitt regime. Then the average blocking probability experienced by an arrival is the same as in the single server loss system with capacity $C$ where the jobs arrive at a Poisson process with intensity $\lambda^{(N)}$ (due to thinning). The average blocking probability is equal to $Er(\lambda^{(N)},C)$, where $Er(\alpha,n)$ denotes the Erlang-B formula for Poisson arrivals with rate $\alpha$ and $n$ number of servers. Since $C$ is fixed, the average blocking probability converges to $Er(C,C)$ when $N\to\infty$. On the other hand, if we consider the complete resource pooling case in which an arrival is accepted for service as long as there is an empty spot at a server in the system, then the average blocking probability is given by $Er(N\lambda^{(N)},NC)$. Then from \cite{Whitt_Loss_heavytraffic},
\beq
\label{eq:pooling}
\lim_{N\to\infty}\sqrt{N}(Er(N\lambda^{(N)},NC))=\frac{\phi(\beta)}{\sqrt{C}\Phi(\beta)},
\eeq
where $\phi(\cdot)$ and $\Phi(\cdot)$ denote the density and distribution functions of a standard normal, respectively.

Now, let $P_{block}^{(N)}$ be the average blocking probability of an arrival in the system with parameter $N$ when the standard JSQ policy is used. It was shown in \cite{Mukherjee_blocking} that we obtain the same result \eqref{eq:pooling} as in the case of complete resource pooling, i.e.,
\beq
\label{eq:jsq_pool}
\lim_{N\to\infty}\sqrt{N}P_{block}^{(N)}=\frac{\phi(\beta)}{\sqrt{C}\Phi(\beta)}.
\eeq
This is to be expected since an arrival will not be blocked from service when there is an empty spot in the system similar to the case of complete resource pooling. As a result, the average blocking probability under the JSQ policy is equal to $Er(N\lambda^{(N)},NC)$. Under the influence of the JSQ($d$) policy, an arrival could be blocked from service even if there is an empty spot in the system. Therefore we expect a decrease in the system utilization when we use the JSQ($d$) policy. However, such a policy has less informational cost over the JSQ policy and is close to the optimal blocking that can be obtained with full resource sharing. It was shown in \cite{Mukherjee_blocking} that if $d$ is also scaled with $N$ denoted by $d^{(N)}$, and if $\lim_{N\to\infty}\frac{d^{(N)}}{\sqrt{N}\log(N)}=\infty$, then we still obtain \eqref{eq:jsq_pool} for the JSQ($d^{(N)}$) scheme. It is thus of interest to know what happens when $d\geq 2$ is fixed and does not scale with $N$.

Our approach is similar to the FCLT approach in \cite{Graham_clt} that was carried out for $M/M/1$ FCFS queues with the JSQ($d$) policy with $\lambda^{(N)}=b$, $b\in\mb{R}_+$. It was shown that  suitably scaled fluctuations of the stochastic empirical occupancy process around the mean-field limit converges to an Ornstein-Uhlenbeck (OU) process both in the transient and stationary regimes as $N\to\infty$. However the paper did not exploit this result further. In this paper we show how the limit theorems can be use to characterize the transient and steady-state system blocking probabilities and thus obtain approximation errors.

In preliminary work \cite{thiru_itc31}, we showed a similar FCLT result as in \cite{Graham_clt} for the case of the loss model when $\lambda^{(N)}=b$, $b\in\mb{R}_+$. We then used the FCLT limit to show that the error between $P_{block}^{(N)}$ and the asymptotic blocking probability $\pi_C^d$ is $o(N^{-\frac{1}{2}})$ where $\bm{\pi}=(\pi_i,0\leq i\leq C)$ is the fixed-point of the corresponding mean-field and $\pi_C$ is the probability that a server is fully occupied when $N\to\infty$. 

The FCLT approach has advantages over the Stein approach because it provides a process level characterization of the scaled mean squared error rather than just the rate at which the limit of the mean squared error between the approximation of the stationary distribution for fixed $N$ and the fixed point of the mean field goes to 0 as $N\to \infty$. More precisely, we show that the diffusion scaled fluctuation process for our model converges to a limit  which is an OU process with non-zero mean that depends on $\beta$ and the fixed-point $\bm{\pi}$ of the mean-field in our model. We then exploit this result to show that $\lim_{N\to\infty}\sqrt{N}(P_{block}^{(N)}-\pi_C^d)$ goes to a limit that can be explicitly characterized in terms of $\beta,\bm{\pi}$, and $C$.  We obtain results both for the transient and stationary occupancy distributions.  

It is worth pointing out that the result we obtain is interesting when the overall system is in the Halfin-Whitt regime, i.e., when $\sigma=C$. The effect of the randomized SQ($d$) routing results in individual loss servers that are also critically loaded but whose blocking cannot be obtained from the classical Halfin-Whitt blocking limit, instead the blocking is obtained from the fixed-point of the mean-field. %or a functional law of large numbers limit of the empirical occupancy distribution.

The rest of the paper is organized as follows. We first introduce the system model and provide some preliminary results in Section~\ref{sec:preliminaries}.  We then give the main results of the paper in Section~\ref{sec:chfour_results} and provide their proofs in Section~\ref{sec:chfour_proofs}. Finally, we conclude the paper in Section~\ref{sec:conclusions}.

\section{System Model and Preliminaries}
\label{sec:preliminaries}

\subsection{System model}
\label{sec:system_model}
In this section, we give details of the system model. 
We study a large-scale multi-server system with $N$ Erlang loss servers and one
central job dispatcher, which routes an incoming request to one of the servers according to a predefined load balancing policy. A server accepts an incoming request if the occupancy or the number of progressing jobs of the server is less than $C$, a predefined integer value referred to as the capacity of a server. Otherwise, the request is blocked from service and it is considered to be discarded from the system. Furthermore, an accepted job is processed at a constant unit rate upon its acceptance for service
until the service of the job is completed. We assume that the service time distributions are exponential with unit mean.

The job dispatcher uses the JSQ($d$) load balancing policy defined below to dispatch the incoming jobs,
\begin{definition}{JSQ($d$) load balancing:}
	The job dispatcher routes an arriving job to the server with the least occupancy among $d$ servers selected uniformly at random. Furthermore, the ties are assumed to be broken uniformly at random. \label{def:sqd}
\end{definition}
\begin{remark}
	For our model, it does not matter whether the dispatcher samples $d$ servers with or without replacement to dispatch an arrival as both methods lead to the same asymptotic results. The proof follows by the same arguments as in \cite[pages~11-12]{Graham_clt}. Hence, we assume that the dispatcher samples $d$ servers with replacement upon an arrival to simplify the analysis.
\end{remark}

The arrival process of jobs is a Poisson process with rate $N\lambda^{(N)}$, where the parameter $\lambda^{(N)}\in\mb{R}_+$ is defined such that for $\sigma\in\mb{R}_+$ and $\beta\in\mb{R}$,
we have
\beq
\label{eq:chfour_arrival_rate}
\lambda^{(N)}=\sigma-\frac{\beta}{\sqrt{N}}.
\eeq
Clearly, $\lambda^{(N)}$ is a perturbation of $\sigma$ by $\frac{\beta}{\sqrt{N}}$.
For our model, we first show that there exists a functional law of large numbers limit referred to as the mean-field limit.  Next, we establish an FCLT result, which is exploited to quantify the error between the actual blocking probability of the system with $N$ servers and the asymptotic blocking probability expressed as a function of the unique fixed-point of the mean-field.
The particular form of $\lambda^{(N)}$ in \eqref{eq:chfour_arrival_rate} subsumes an important special case of $\sigma=C$ which corresponds to the Halfin-Whitt heavy traffic regime as the resulting traffic intensity $\rho^{(N)}=\frac{\lambda^{(N)}}{C}$ approaches one as $N\to\infty$ and $\lim_{N\to\infty}\sqrt{N}(1-\rho^{(N)})=\frac{\beta}{C}$. In this case the arrival rate of jobs $N\lambda^{(N)}$ and the system capacity $NC$ are related as $NC=N\lambda^{(N)}+\beta\sqrt{N}$, and they converge to $\infty$ as $N\to\infty$.

\subsection{Notation}
\label{sec:chfour_notation}
%We now introduce the required notation.
% We first provide a summary of contributions of previous works on the model of this paper.
%In earlier works\cite{xie,arpan}, the main focus was to obtain a functional law of large numbers result and by exploiting this result, they were able to obtain the asymptotic average blocking probability of a job when $N\to\infty$. The impact of the SQ($d$) policy was then studied by using the obtained asymptotic average blocking formula. The analysis was carried out by using mean-field techniques which we recall now.
%Since the identities of servers do not play any role, the system evolution can be described by the Markov process $\mf{x}^{(N)}(t)=(x^{(N)}_i(t),0\leq i\leq C)$ where $x^{(N)}_i(t)$ denotes the fraction of servers with at least $i$ progressing jobs at time $t$.

%We now introduce the required notation.
Since the job dispatcher uses only the knowledge of the occupancies of servers and it does not use their identities to dispatch an arrival, we consider the Markov process $(\mf{X}^{(N)}(t), t\geq 0)$ to model the time-evolution of the system where $\mf{X}^{(N)}(t)=(\mf{X}^{(N)}_i(t),0\leq i\leq C)$ with $\mf{X}^{(N)}_i(t)$ denoting the fraction of the servers with at least $i$ progressing jobs at time $t$. Let $\mb{U}$ be the space defined as
\beq
\label{eq:chfour_space}
\mb{U}\triangleq\{(u_0,u_1,\cdots,u_C):u_0=1\geq u_1\geq\cdots\geq u_C\geq 0\}.
\eeq
It is evident that $\mf{X}^{(N)}(t)$ lies in the space $\mb{U}$.
Without loss of generality, we write an element of the form $(u_0,\cdots,u_C)$ as $\mf{u}$. The space $\mb{U}$ is equipped with the metric generated by the euclidean norm $\norm{\cdot}_2$ defined as
\beq
\label{eq:chfour_euclidean_norm}
\norm{\mf{u}}_2=\sqrt{\sum_{i=0}^C\abs{u_i}^2},
\eeq
where $\mf{u}=(u_0,\cdots,u_C)$. It can be verified that the space $\mb{U}$ is a Polish space.

 We study stochastic processes that are defined on $(\Omega,\mb{F},\mb{P})$ with sample paths belonging to the space of right continuous functions with left limits, such functions are also called c\`adl\`ag functions.
The space of c\`adl\`ag functions is equipped with the Skorohod $J_1$-topology. 
 We write $\mf{Y}_n\Rightarrow \mf{Y}$ as $n\to\infty$ to indicate that a sequence of random elements $\{\mf{Y}_n\}_{n\geq 1}$ converges in distribution to a random element $\mf{Y}$.
 For two real valued local martingales $(\mf{M}^1(t),t\geq 0)$ and
 $(\mf{M}^2(t),t\geq 0)$, let the covariation process be denoted by $(<\mf{M}^1,\mf{M}^2>_t,t\geq 0)$ and the quadratic variation process be denoted by $(<\mf{M}^1>_t,t\geq 0)=(<\mf{M}^1,\mf{M}^1>_t,t\geq0)$.

\subsection{Preliminaries}
In this section, we present a mathematical modeling of the main problem and give some preliminary results.
We first begin with a discussion on the time-evolution of the process $(\mf{X}^{(N)}(t), t\geq 0)$.
 At an arrival instant $t$, if the system state is $\mf{b}=(b_0,\cdots,b_C)$ implying that the fraction of the servers with at least $i$ progressing jobs at time $t$ is equal to $b_i$ for $0\leq i\leq C$, then according to the JSQ($d$) policy the destination server of the job will have occupancy $n$ with probability $b_n^d-b_{n+1}^d$. Since the rate of the arrival process is $N\lambda^{(N)}$, the total instantaneous rate of arrivals to servers that have $n$ jobs is equal to $N\lambda^{(N)}((\mf{X}_{n-1}^{(N)}(t))^d-(\mf{X}_{n}^{(N)}(t))^d)$. Furthermore,  since the service times have exponential distributions with unit rate, the total instantaneous departure rate of jobs from servers with $n$ progressing jobs is equal to $nN(\mf{X}_{n}^{(N)}(t)-\mf{X}_{n+1}^{(N)}(t))$. As a result, we can model the time-evolution of the process $(\mf{X}^{(N)}(t), t\geq 0)$ by using random time change of a set of mutually independent unit rate Poisson processes as in \cite[Section~2.1]{pang}, which we explain below.

Let $\{(\mc{N}_i(t),t\geq 0)\}_{i\geq 1}$ be a set of mutually independent unit rate Poisson processes where $(\mc{N}_i(t),t\geq 0)$ is used to model the arrival process to servers that have $i-1$ progressing jobs. Similarly, let $\{(\mc{D}_i(t),t\geq 0)\}_{i\geq 1}$ be the collection of a set of mutually independent unit rate Poisson processes where $(\mc{D}_i(t),t\geq 0)$ is used to model the departure process from servers that have $i$ progressing jobs.
 Furthermore, the set of processes $\{(\mc{D}_i(t),t\geq 0)\}_{i\geq 1}$ is independent of the set of processes $\{(\mc{N}_i(t),t\geq 0)\}_{i\geq 1}$. Also, $\{(\mc{N}_i(t),t\geq 0)\}_{i\geq 1}$ and $\{(\mc{D}_i(t),t\geq 0)\}_{i\geq 1}$ are independent of $\mf{X}^{(N)}(0)$.
%We use $(\mc{D}_i(t),t\geq 0)$ to model the departure process from servers that have $i$ progressing jobs.
Since the arrival process of jobs to the system is a Poisson process with rate $N\lambda^{(N)}$ and the service time distributions are exponential with unit mean, we can write
\beq
\mf{X}^{(N)}_0(t)=1,
\eeq
and for $n\geq 1$,
\begin{multline}
\label{eq:chfour_evolutions}
\mf{X}_n^{(N)}(t)=\mf{X}^{(N)}_n(0)
+\frac{1}{N}\mc{N}_n\Big(N\lambda^{(N)}\int_{s=0}^t((\mf{X}_{n-1}^{(N)}(s))^d-(\mf{X}_{n}^{(N)}(s))^d)\,ds\Big)\\
-\frac{1}{N}\mc{D}_n\Big(Nn\int_{s=0}^t((\mf{X}_{n}^{(N)}(s))-(\mf{X}_{n+1}^{(N)}(s)))\,ds\Big).
\end{multline}
We choose the filtration $(\mc{F}^{(N)}(t),t\geq 0)$ where
\begin{multline}
\mc{F}^{(N)}(t)=\sigma\left(\mf{X}^{(N)}(0), \mc{N}_n\Big(N\lambda^{(N)}\int_{s=0}^r((\mf{X}_{n-1}^{(N)}(s))^d-(\mf{X}_{n}^{(N)}(s))^d)\,ds\Big),\right.\\
\left.\mc{D}_n\Big(Nn\int_{s=0}^r((\mf{X}_{n}^{(N)}(s))-(\mf{X}_{n+1}^{(N)}(s)))\,ds\Big),n\geq 1,0\leq r\leq t\right),
\end{multline}
augmented by all the null sets.

We now present the results on the mean-field analysis of the model without proofs as they directly follow from the case of $\lambda^{(N)}=b$ for $b\in\mb{R}_+$, studied in \cite{arpan}. The resulting mean-field equations (MFEs) in our case are the same as in the case of $\lambda^{(N)}=b$ except that $\sigma$ replaces $b$.% We recall the MFEs in this chapter as we use them frequently.

\bt
For $\mf{u}\in\mb{U}$, if $\mf{X}^{(N)}(0)\Rightarrow \mf{u}$ as $N\to\infty$, then $(\mf{X}^{(N)}(t),t\geq 0)\Rightarrow(\mf{x}(t,\mf{u}),t\geq 0)$ as $N\to\infty$ where $(\mf{x}(t,\mf{u}),t\geq 0)=(x_n(t,\mf{u}),t\geq 0,\,0\leq n\leq C)$ is the unique solution to the following equations: let $\mf{h}(\mf{x}(t,\mf{u}))=(h_n(\mf{x}(t,\mf{u})),0\leq n\leq C)$, where
\begin{align}
%\label{eq:chfour_mfe1}
\mf{x}(0,\mf{u})=\mf{u},\,\,\,\,\frac{dx_n(t,\mf{u})}{dt}&=h_n(\mf{x}(t,\mf{u})),\label{eq:chfour_mfe1}
\end{align}
satisfying
\beq
\label{eq:chfour_mfe2}
h_0(\mf{x}(t,\mf{u}))=0,
\eeq
and for $n\geq 1$,
\beq
\label{eq:chfour_mfe3}
h_n(\mf{x}(t,\mf{u}))=\sigma(x_{n-1}^d(t,\mf{u})-x_n^d(t,\mf{u}))-n(x_n(t,\mf{u})-x_{n+1}(t,\mf{u}))
\eeq
with $x_0(t,\mf{u})=1$ and $x_{C+1}(t,\mf{u})=0$.
The deterministic process $(\mf{x}(t,\mf{u}),t\geq 0)$ is referred to as the mean-field limit and equations \eqref{eq:chfour_mfe1}-\eqref{eq:chfour_mfe3} are referred to as the mean-field equations with initial point $\mf{u}$.
\et
Without loss of generality, we say that a process $(\mf{y}(t),t\geq 0)$ is a solution to the differential equations $\eqref{eq:chfour_mfe1}$-$\eqref{eq:chfour_mfe3}$, it means that it is the unique generic solution with initial point $\mf{y}(0)$.

The mean-field $(\mf{x}(t,\mf{u}),t\geq 0)$ has a unique global asymptotically stable fixed-point $\boldsymbol{\pi}=(\pi_n,0\leq n\leq C)$ with $\pi_0=1$.  Also, the following exchange of limits holds
\beq
\label{eq:chfour_exchange_limits}
\lim_{N\to\infty}\lim_{t\to\infty}\mf{X}^{(N)}(t)=\lim_{t\to\infty}\lim_{N\to\infty}\mf{X}^{(N)}(t).
\eeq
Using \eqref{eq:chfour_exchange_limits}, under the assumption of the exchangeability of initial states of servers, we can show the independence of any finite set of servers as $N\to\infty$. Also, it can be shown that as $N\to\infty$ a server's distribution equals to $\mf{x}(t,\mf{u})$ and $\pi$ at time $t$ and in the stationary regime, respectively. As a result, $\pi_C$ denotes the stationary probability that a server is fully occupied as $N\to\infty$.
Since the dispatcher samples $d$ servers upon arrival and selects one of them as the destination server, the stationary average blocking probability of a job as $N\to\infty$ is equal to $\pi_C^d$, where we use the fact that the chosen $d$ servers are independent of each other. Our objective is to find the gap between the actual blocking probability of the system with $N$ servers and  $\pi_C^d$
as a function of the parameters $\beta$, $\bm{\pi}$, $\sigma$, $N$, and $C$. 

We can find $\bm{\pi}$ numerically as follows. 
The fixed-point $\bm{\pi}$ is the unique solution to the following equations
\beq
\label{eq:chfour_fixed_pt}
\sigma(\pi_{n-1}^d-\pi_n^d)=n(\pi_n-\pi_{n+1})
\eeq
for $n\geq 1$ and $\pi_{C+1}=0$.
Then from \eqref{eq:chfour_fixed_pt}, we can also write
\beq
\label{eq:chfour_fixed_pt2}
\sigma\frac{(\pi_{n-1}^d-\pi_n^d)}{(\pi_{n-1}-\pi_n)}(\pi_{n-1}-\pi_n)=n(\pi_n-\pi_{n+1})
\eeq
for $n\geq 1$ and $\pi_{C+1}=0$. Let us define $\hat{\lambda}_n=\sigma\frac{(\pi_n^d-\pi_{n+1}^d)}{(\pi_n-\pi_{n+1})}$. Then from \eqref{eq:chfour_fixed_pt2}, 
$\bm{\pi}$ is the stationary distribution of the single server loss model with a Poisson arrival process of jobs having rate $\hat{\lambda}_n$ when there are $n$ progressing jobs, and
$\pi_n$ is the probability that the server has at least $n$ progressing jobs. Let $\mb{M}_1(\{0,1,\cdots,C\})$ be the set of probability measures on $\{0,1,\cdots,C\}$. Then from \cite{arpan}, the fixed-point $\bm{\pi}$ can be computed using the formula for the stationary distribution of a single server loss system with state-dependent arrival rates.
We first define two mappings,
$\Theta:\mb{M}_1(\{0,1,\cdots,C\})\mapsto\mb{R}_+^{C+1}$ and     $\widehat{\Xi}:\mb{R}_+^{C+1}\mapsto \mb{M}_1(\{0,1,\cdots,C\})$ that are used in computing $\bm{\pi}$. For every $(p_0,\cdots,p_C)\in \mb{M}_1(\{0,1,\cdots,C\})$, there exists $(r_0,\cdots,r_C)\in\mb{R}_+^{C+1}$ such that
\beq
\Theta((p_0,\cdots,p_C))=(r_0,\cdots,r_C),
\eeq
where
\beq
r_n=\sigma\frac{((\sum_{j=n}^Cp_j)^d-(\sum_{i=n+1}^Cp_i)^d)}{((\sum_{j=n}^Cp_j)-(\sum_{i=n+1}^Cp_i))}.
\eeq
Similarly, for every  $(b_0,\cdots,b_C)\in\mb{R}_+^{C+1}$, there exists $(a_0,\cdots,a_C)\in\mb{M}_1(\{0,1,\cdots,C\})$ such that
\beq
\widehat{\Xi}((b_0,\cdots,b_C))=(a_0,\cdots,a_C),
\eeq
where
\beq
a_n=\left(\prod_{i=1}^n\Big(\frac{b_{i-1}}{i}\Big)\right)a_0
\eeq
for $n\geq 1$ and $\sum_{i=0}^Ca_i=1$. Then $\bm{\pi}$ is the unique fixed-point of the mapping $\widehat{\Xi}(\Theta)$ which can be computed numerically.%as in Lemma~\ref{thm:ch1_Erlang_formula} except that we replace $\lambda$ by $\sigma$.
% \begin{proposition}
%     \label{thm:ch4_Erlang_formula}
%     Let $\bm{\theta}=(\theta_{n},0\leq n\leq C)$ be the unique fixed-point of the mapping $\widehat{\Xi}(\Theta)$. Then the fixed-point $\bm{\pi}$ of the mean-field is given by, $n\geq 1$,
%     \beq
%     \pi_n=\sum_{j=n}^C\theta_j.
%     \eeq

% \end{proposition}

Our objective is to study the limit of the fluctuation process $(\mf{Z}^{(N)}(t),t\geq 0)$ as $N\to\infty$, where
\beq
\label{eq:chfour_z_n}
\mf{Z}^{(N)}(t)=\sqrt{N}(\mf{X}^{(N)}(t)-\mf{x}(t,\mf{u})).
\eeq
It can be checked that $\mf{Z}^{(N)}(t)$ lies in the space $\mb{V}$ defined as
\beq
\mb{V}\triangleq\{(r_0,\cdots,r_C): r_0=0 \text{ and } r_i\in\mb{R}, 1\leq i\leq C \}.
\eeq
We equip the space $\mb{V}$ with the topology induced by the euclidean norm \eqref{eq:chfour_euclidean_norm}. 
Our analysis uses the operator norm $\norm{\cdot}_2$ defined as 
\beq
\norm{K}_2=\sup_{\mf{v}\in\mb{V}}\frac{\norm{K\mf{v}}_2}{\norm{\mf{v}}_2},
\eeq
where $K:\mb{V}\mapsto\mb{V}$ is a linear operator.

Next, we obtain the time evolution of the process $(\mf{Z}^{(N)}(t),t\geq 0)$ by using \eqref{eq:chfour_evolutions}, \eqref{eq:chfour_mfe1}, and \eqref{eq:chfour_z_n}. For this, we first define the following three useful operators $W_1, W_2, W_3:\mb{U}\mapsto\mb{V}$ as follows:\\
 for $\mf{b}\in\mb{U}$ with $1\leq i\leq 3$ we define
  \beq
 W_i(\mf{b})=((W_i(\mf{b}))_n, n\geq 0), 
 \eeq
 where
\beq
(W_1(\mf{b}))_0=0,\,\, (W_2(\mf{b}))_0=0,\,\,(W_3(\mf{b}))_0=0,
\eeq
and for $1\leq n\leq C$,
\beq
\label{eq:chfour_w1w2w3}
(W_1(\mf{b}))_n=\sigma (b_{n-1}^d-b_n^d),\,\,(W_2(\mf{b}))_n=n (b_{n}-b_{n+1}),\,\,(W_3(\mf{b}))_n=\beta (b_{n-1}^d-b_n^d).
\eeq
From \eqref{eq:chfour_mfe3} and \eqref{eq:chfour_w1w2w3},  we have 
\beq
h_n(\mf{b})=(W_1(\mf{b}))_n-(W_2(\mf{b}))_n.
\eeq
Furthermore, let $W:\mb{U}\mapsto\mb{V}$ be the operator defined as
\beq
W=W_1-W_2.
\eeq
The operator $W$ is Lipschitz continuous satisfying the following inequality for all $\mf{a},\mf{b}\in\mb{U}$,
\beq
\norm{W(\mf{a})-W(\mf{b})}_2\leq B_W\norm{\mf{a}-\mf{b}}_2,
\eeq
where $B_W=2d\sqrt{\sigma^2+C^2}$.

Now we define a set of independent square-integrable martingales $(\mf{M}^{(N)}(t),t\ge 0)=\{(\mf{M}^{(N)}_i(t),t\geq 0)\}_{(i\in{0,1,\cdots,C)}}$ adapted to the filtration $(\mc{F}^{(N)}(t),t\geq 0)$ such that $(\mf{M}^{(N)}(t),t\ge 0)$ is independent of $\mf{Z}^{(N)}(0)$ and for $i\geq 1$,
\beq
\label{eq:chfour_covN}
<\mf{M}^{(N)}_i>_t=\int_{s=0}^t\left((W_1(\mf{X}^{(N)}(s)))_i+(W_2(\mf{X}^{(N)}(s)))_i-\frac{1}{\sqrt{N}}(W_3(\mf{X}^{(N)}(s)))_i\right)\,ds.
\eeq
Then from \eqref{eq:chfour_evolutions}-\eqref{eq:chfour_mfe3}, and \eqref{eq:chfour_z_n}, we get
\beq
\label{eq:chfour_SDEN2}
\mf{Z}^{(N)}(t)=\mf{Z}^{(N)}(0)+\int_{s=0}^t\sqrt{N}(W(\mf{X}^{(N)}(s))-W(\mf{x}(s,\mf{u})))\,ds
-\int_{s=0}^tW_3(\mf{X}^{(N)}(s))\,ds+\mf{M}^{(N)}(t).
\eeq

\section{Summary of Main Results}
\label{sec:chfour_results}
In this section, we give main results and provide their proofs in Section~\ref{sec:chfour_proofs}. We present the results related to the transient regime and the stationary regime in Sections~\ref{sec:chfour_transient_results} and~\ref{sec:chfour_stationary_results}, respectively.

\subsection{Transient Regime}
\label{sec:chfour_transient_results}
In this section, we show that the process $(\mf{Z}^{(N)}(t),t\geq 0)$ converges to an OU process in the transient regime as $N\to\infty$.
First, we begin with the following result that concludes stochastic boundedness of the process $(\mf{Z}^{(N)}(t),t\geq 0)$ when $N\to\infty$. We use this property in proving the tightness of the sequence $\{(\mf{Z}^{(N)}(t),t\geq 0)\}_{N\geq 1}$.
\begin{lemma}
	\label{thm:ch4_transient_bound}
	For any $T>0$, if $\limsup_{N\to\infty}\expect{\norm{\mf{Z}^{(N)}(0)}_2^2}<\infty$, then 
	\beq
	\label{eq:ch4_transient_bound}
	\limsup_{N\to\infty}\expect{\sup_{0\leq t\leq T}\norm{\mf{Z}^{(N)}(t)}_2^2}<\infty.
	\eeq
\end{lemma}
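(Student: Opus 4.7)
The plan is to start from the semimartingale decomposition \eqref{eq:chfour_SDEN2} and derive an integral inequality of the form required for Gronwall's lemma, after which the stated $\limsup$ bound follows immediately from the hypothesis on the initial condition.

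First I would take the Euclidean norm in \eqref{eq:chfour_SDEN2} and apply the triangle inequality to get
\[
\|\mf{Z}^{(N)}(t)\|_2 \le \|\mf{Z}^{(N)}(0)\|_2 + \int_0^t \sqrt{N}\,\|W(\mf{X}^{(N)}(s))-W(\mf{x}(s,\mf{u}))\|_2\,ds + \int_0^t \|W_3(\mf{X}^{(N)}(s))\|_2\,ds + \|\mf{M}^{(N)}(t)\|_2.
\]
Using the Lipschitz bound $\|W(\mf{a})-W(\mf{b})\|_2 \le B_W\|\mf{a}-\mf{b}\|_2$ together with the definition of $\mf{Z}^{(N)}$, the first integrand is $\le B_W\|\mf{Z}^{(N)}(s)\|_2$. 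Since $\mf{X}^{(N)}(s)\in\mb{U}$, each coordinate of $W_3$ is bounded by $|\beta|$, so $\|W_3(\mf{X}^{(N)}(s))\|_2 \le |\beta|\sqrt{C}$ and the third term is uniformly bounded by $|\beta|\sqrt{C}\,T$.

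Next I would square using $(a_1+a_2+a_3+a_4)^2\le 4(a_1^2+a_2^2+a_3^2+a_4^2)$, bound the $\bigl(\int_0^t \|\mf{Z}^{(N)}(s)\|_2\,ds\bigr)^2$ term via Cauchy--Schwarz by $T\int_0^t\|\mf{Z}^{(N)}(s)\|_2^2\,ds$, and pass to the running supremum on $[0,t]$ with $t\le T$:
\[
\sup_{0\le s\le t}\|\mf{Z}^{(N)}(s)\|_2^2 \le 4\|\mf{Z}^{(N)}(0)\|_2^2 + 4B_W^2 T\!\int_0^t\!\sup_{0\le u\le s}\|\mf{Z}^{(N)}(u)\|_2^2\,ds + 4\beta^2 C T^2 + 4\sup_{0\le s\le t}\|\mf{M}^{(N)}(s)\|_2^2.
\]
Then I take expectations and estimate the martingale term by Doob's $L^2$ inequality componentwise: $\mb{E}[\sup_{0\le s\le T}(\mf{M}_i^{(N)}(s))^2]\le 4\mb{E}[\langle\mf{M}_i^{(N)}\rangle_T]$. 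Using \eqref{eq:chfour_covN} and the pointwise bounds $(W_1(\mf{b}))_i\le\sigma$, $(W_2(\mf{b}))_i\le C$, $|(W_3(\mf{b}))_i|\le|\beta|$, each $\mb{E}[\langle\mf{M}_i^{(N)}\rangle_T]$ is bounded by $(\sigma+C+|\beta|)T$ uniformly in $N$ (for $N$ large enough that $|\beta|/\sqrt{N}\le|\beta|$), so $\mb{E}[\sup_{0\le s\le T}\|\mf{M}^{(N)}(s)\|_2^2]\le K$ for some $N$-free constant $K$.

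Setting $f_N(t)=\mb{E}[\sup_{0\le s\le t}\|\mf{Z}^{(N)}(s)\|_2^2]$, these estimates yield
\[
f_N(t) \le C_1^{(N)} + C_2\int_0^t f_N(s)\,ds,\quad 0\le t\le T,
\]
with $C_1^{(N)} = 4\mb{E}[\|\mf{Z}^{(N)}(0)\|_2^2] + 4\beta^2 C T^2 + 4K$ and $C_2 = 4B_W^2 T$. Finiteness of $f_N(T)$, needed to apply Gronwall, follows from the crude deterministic bound $\|\mf{Z}^{(N)}(t)\|_2\le 2\sqrt{N(C+1)}$ since $\mf{X}^{(N)}(t),\mf{x}(t,\mf{u})\in\mb{U}$. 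Gronwall's lemma then gives $f_N(T)\le C_1^{(N)} e^{C_2 T}$, and taking $\limsup_{N\to\infty}$ and using the hypothesis $\limsup_{N\to\infty}\mb{E}[\|\mf{Z}^{(N)}(0)\|_2^2]<\infty$ delivers \eqref{eq:ch4_transient_bound}.

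The step requiring the most care is the martingale estimate, since a naive attempt to control $\|\mf{M}^{(N)}(t)\|_2$ directly would fail without using the fact that the compensating scaling $1/\sqrt{N}$ is already absorbed into the definition of $\mf{M}^{(N)}$ in \eqref{eq:chfour_covN}; once one sees that the integrands $(W_1)_i,(W_2)_i,(W_3)_i$ are uniformly bounded on $\mb{U}$, the Doob/BDG bound is independent of $N$ and the rest is a routine Gronwall argument.
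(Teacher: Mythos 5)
Your proposal is correct and follows essentially the same route as the paper: the semimartingale decomposition \eqref{eq:chfour_SDEN2}, the Lipschitz bound $B_W$ on $W$, the uniform bound on $W_3$ over $\mb{U}$, Doob's $L^2$ inequality combined with the uniformly bounded quadratic variation \eqref{eq:chfour_covN}, and Gronwall. The only (immaterial) difference is the order of operations — the paper applies Gronwall pathwise before squaring and taking expectations, while you square, take the running supremum and expectation first and apply Gronwall to $f_N(t)$, which also lets you cleanly justify finiteness via the crude bound $\|\mf{Z}^{(N)}(t)\|_2\le 2\sqrt{N(C+1)}$.
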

\begin{proof}
	See Section~~\ref{app:ch4_transient_bound}.
\end{proof}

As we show later in this section, any limit point of $(\mf{Z}^{(N)}(t),t\geq 0)$ is a solution to the stochastic differential equation (SDE) defined in \eqref{eq:chfour_OU}. Next, we introduce some notation that is used in \eqref{eq:chfour_OU}.
The proposed SDE depends on $(\mf{s}(t),t\geq 0)$ which is a solution of the equation \eqref{eq:chfour_linear}.
%
%when $(\mf{r}(t),t\geq 0)$ in \eqref{eq:chfour_linear} is replaced with the mean-field $(\mf{x}(t,\mf{u}),t\geq 0)$.
%
Let $(\mf{w}(t),t\geq 0)$ be a generic solution of \eqref{eq:chfour_mfe1}-\eqref{eq:chfour_mfe3} with an initial point $\mf{w}(0)$, where
$\mf{w}(t)=(w_n(t), 0\leq n\leq C+1)$ satisfying $w_0(t)=1$ and $w_{C+1}(t)=0$ . Then we have
\begin{align}
%\label{eq:chfour_mfe1}
\frac{dw_n(t)}{dt}&=h_n(\mf{w}(t)),\label{eq:chfour_mfe1_sec3}
\end{align}
where
\beq
\label{eq:chfour_mfe2_sec3}
h_0(\mf{w}(t))=0 
\eeq
and for $1\leq n\leq C$,
\beq
\label{eq:chfour_mfe3_sec3}
h_n(\mf{w}(t))=\sigma(w_{n-1}^d(t)-w_n^d(t))-n(w_n(t)-w_{n+1}(t)).
\eeq

 By linearizing \eqref{eq:chfour_mfe1_sec3}-\eqref{eq:chfour_mfe3_sec3}
around a solution $(\mf{r}(t),t\geq 0)$ of \eqref{eq:chfour_mfe1_sec3}-\eqref{eq:chfour_mfe3_sec3} with an initial point $\mf{r}(0)$, we get
\beq
\label{eq:chfour_linear}
\frac{d\mf{s}(t)}{dt}=H(\mf{r}(t))\mf{s}(t),
\eeq
where for $\mf{a}\in\mb{U}$ and $\mf{b}\in\mb{V}$,  the linear operator $H(\mf{a}):\mb{V}\mapsto\mb{V}$ is defined as
\beq
(H(\mf{a})\mf{b})_n=\sigma da_{n-1}^{d-1}b_{n-1}-(\sigma d a_n^{d-1}+n)b_n+nb_{n+1},
\eeq
$n\geq 1$. Note that any solution $(\mf{s}(t), t\geq 0)$ of \eqref{eq:chfour_linear} satisfies that $\mf{s}(t)=\mf{w}(t)-\mf{r}(t)$, where $(\mf{w}(t), t\geq0)$ is a solution of the equations $\eqref{eq:chfour_mfe1_sec3}$-\eqref{eq:chfour_mfe3_sec3} with an initial point $\mf{w}(0)$.

 We will show that the limit of the sequence $\{(\mf{Z}^{(N)}(t),t\geq 0)\}_{N\geq 1}$ depends on the process $(\mf{s}(t),t\geq 0)$ when $(\mf{r}(t),t\geq 0)$ in \eqref{eq:chfour_linear} is replaced with the mean-field $(\mf{x}(t,\mf{u}),t\geq 0)$.
 The operator $H(\mf{a})$ is a matrix in the canonical basis $(0,1,0,\cdots,0)$, $(0,0,1,0,\cdots,0)$, $\ldots$, $(0,0,\cdots,0,1)$, where the dimension of each vector is $C+1$. We can write $H(\mf{a})$ as the following matrix of size $C\times C$:
%\beq
%\label{eq:chfour_H1}
\[
H(\mf{a})=
\left[ {\begin{array}{cccccc}
	-\nu_1 & 1 & 0 &0&\cdots&0\\
	\gamma_1 & 	-\nu_2&2&0&\cdots&0\\
	0& \gamma_2&-\nu_3&3&\cdots&0\\
	\vdots&\vdots&\ddots&\ddots&\ddots&\vdots\\
	0& 0&\cdots&\gamma_{C-2}&-\nu_{C-1}&C-1\\
	0&0&\cdots&0&\gamma_{C-1}&-\nu_C\\
	\end{array} } \right],
\]
%\eeq
where
$\gamma_i=\sigma d a_i^{d-1}$ and $\nu_i=\gamma_i+i$, $1\leq i\leq C$.
%\eeq

Let $(\mf{M}(t),t\geq 0)=\{(\mf{M}_i(t),t\geq 0)\}_{i\in\{0,1,\cdots,C\}}$ be a collection of mutually independent real valued continuous and centered Gaussian martingales, determined in law by their deterministic quadratic variation process
\beq
\label{eq:chfour_variation1}
<\mf{M}_n>_t=\int_{s=0}^t((W_1(\mf{x}(s,\mf{u}))_n+(W_2(\mf{x}(s,\mf{u}))_n)\,ds,
\eeq
for $n\geq 0$.
Note that both $\mf{M}(t)$ and $(<\mf{M}_i>_t,0\leq i\leq C)$ lie in $\mb{V}$.
From \eqref{eq:chfour_variation1}, the martingale $(\mf{M}(t),t\geq 0)$ is square integrable since $((W_1(\mf{b}))_n+(W_2(\mf{b}))_n)$ for $\mf{b}\in\mb{U}$ is uniformly bounded in $n$ and $\mf{b}$ due to the fact that $0\leq b_i\leq 1$, for $0\leq i\leq C$.

Now we introduce an SDE, later we show that the limit of the process $(\mf{Z}^{(N)}(t),t\geq 0)$ as $N\to\infty$ in the transient regime is a unique solution of this SDE. 
\begin{defn}{SDE for the Transient Regime:}
	Let $(\mf{Z}(t),t\geq 0)$ be a solution of the following SDE,
	\beq
	\label{eq:chfour_OU}
	\mf{Z}(t)=\mf{Z}(0)+\int_{s=0}^tH(\mf{x}(s,\mf{u}))\mf{Z}(s) \,ds-\int_{s=0}^tW_3(\mf{x}(s,\mf{u}))\,ds+\mf{M}(t).
	\eeq
\end{defn}
The solution of \eqref{eq:chfour_OU} is an OU process.
Next, we study the SDE \eqref{eq:chfour_OU} below. 
\bt
\label{thm:ch4_soln}
We show that
\benum
\item For $\mf{a}\in\mb{U}$, the linear operator $H(\mf{a})$ satisfies $\norm{H(\mf{a})}_2<B_H$, where $B_H=\sqrt{32(\sigma^2 d^2+C^2)}$.  
\item If $\expect{\norm{\mf{Z}(0)}_2^2}<\infty$, then there exists a unique strong solution to \eqref{eq:chfour_OU} denoted by $(\mf{Z}(t),t\geq 0)$ that satisfies $\expect{\sup_{t\leq T}\norm{\mf{Z}(t)}_2^2}<\infty$.
\eenum
\et
The proof of Theorem~\ref{thm:ch4_soln} follows by the similar arguments of the proof of \cite[Theorem~2]{thiru_itc31} and hence, we omit the proof.
%\begin{proof}
%	See Section~\ref{app:ch4_soln}.
%\end{proof}

%and for $i\geq 1$,
%	\beq
%	<M^{(N)}_i>_t=\int_{s=0}^t(W_1(\mf{x}^{(N)}(s))+W_2(\mf{x}^{(N)}(s)))\,ds.
%	\eeq

Now we present the main result on the transient regime below. 
\begin{theorem}
	\label{thm:ch4_transient}
	If $\mf{Z}^{(N)}(0)\Rightarrow\mf{Z}(0)$, then $(\mf{Z}^{(N)}(t),t\geq 0)\Rightarrow (\mf{Z}(t),t\geq 0)$ where $(\mf{Z}(t),t\geq 0)$ is the unique solution of \eqref{eq:chfour_OU} with the initial point $\mf{Z}(0)$.
\end{theorem}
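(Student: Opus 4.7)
The plan is to proceed via the standard tightness--and--limit--identification strategy based on the semimartingale decomposition \eqref{eq:chfour_SDEN2}. First I would establish tightness of the sequence $\{(\mf{Z}^{(N)}(t),t\geq 0)\}_{N\ge 1}$ in $D([0,T],\mb{V})$ by combining the $L^2$ bound of Lemma~\ref{thm:ch4_transient_bound} with Aldous's criterion applied to each term of \eqref{eq:chfour_SDEN2}. The deterministic integrand $W_3(\mf{X}^{(N)}(s))$ is uniformly bounded in $N$ and $s$; the martingale part $\mf{M}^{(N)}(t)$ has predictable quadratic variation bounded by a constant times $t$ (since $W_1+W_2$ is uniformly bounded on $\mb{U}$ and the $N^{-1/2}$ correction from $W_3$ is negligible), which together with Doob's inequality controls the martingale piece. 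The only subtle term is the drift $\int_0^t\sqrt{N}(W(\mf{X}^{(N)}(s))-W(\mf{x}(s,\mf{u})))\,ds$, which I would handle through linearization.

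Second, I would identify any subsequential limit. The key algebraic step is the Taylor expansion
\[
\sqrt{N}(W(\mf{X}^{(N)}(s))-W(\mf{x}(s,\mf{u})))=H(\mf{x}(s,\mf{u}))\mf{Z}^{(N)}(s)+R^{(N)}(s),
\]
where $R^{(N)}(s)$ collects the quadratic and higher order terms of the polynomial map $W$ multiplied by $\sqrt{N}$. Since $W_2$ is linear and each coordinate of $W_1$ is a polynomial of degree $d$ in the components of $\mf{b}$, the identity $\sqrt{N}(b^d-a^d)=da^{d-1}\sqrt{N}(b-a)+O(N^{-1/2}(\sqrt{N}(b-a))^2)$ yields $\norm{R^{(N)}(s)}_2\leq K_d\,N^{-1/2}\norm{\mf{Z}^{(N)}(s)}_2^2$ for some constant $K_d$ depending only on $d,C,\sigma$. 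Lemma~\ref{thm:ch4_transient_bound} then gives $\int_0^T\norm{R^{(N)}(s)}_2\,ds\to 0$ in $L^1$, so the drift is asymptotically equivalent to $\int_0^t H(\mf{x}(s,\mf{u}))\mf{Z}^{(N)}(s)\,ds$. The integral $\int_0^t W_3(\mf{X}^{(N)}(s))\,ds$ converges in probability, uniformly on $[0,T]$, to $\int_0^t W_3(\mf{x}(s,\mf{u}))\,ds$ by continuity of $W_3$ and the functional law of large numbers for $\mf{X}^{(N)}$ recalled earlier.

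Third, for the martingale component I would invoke the martingale FCLT (e.g.\ Theorem~1.4 of Chapter~7 in Ethier--Kurtz, or Rebolledo's theorem). The predictable quadratic variation in \eqref{eq:chfour_covN} converges, by the mean-field convergence of $\mf{X}^{(N)}$ together with the $N^{-1/2}$ prefactor on the $W_3$ term, to the deterministic limit \eqref{eq:chfour_variation1}. Since the jumps of $\mf{M}^{(N)}$ are uniformly $O(N^{-1/2})$, the Lindeberg-type condition on the jumps is trivial, and hence $\mf{M}^{(N)}\Rightarrow \mf{M}$, the continuous centered Gaussian martingale of \eqref{eq:chfour_variation1}. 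Combining the three pieces through the continuous mapping and martingale problem machinery, any subsequential limit $\mf{Z}$ of $\mf{Z}^{(N)}$ satisfies the SDE \eqref{eq:chfour_OU} with initial condition $\mf{Z}(0)$. Theorem~\ref{thm:ch4_soln} then provides pathwise uniqueness for this SDE, so the full sequence converges in distribution to $\mf{Z}$.

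The main obstacle is controlling the nonlinear remainder $R^{(N)}$ uniformly on $[0,T]$ while simultaneously passing to the limit in the linear drift term: the drift contains a non-vanishing linear feedback through $H(\mf{x}(s,\mf{u}))\mf{Z}^{(N)}(s)$, so one must Gronwall past the $N^{-1/2}\norm{\mf{Z}^{(N)}(s)}_2^2$ error using the bound $\norm{H(\mf{a})}_2<B_H$ from Theorem~\ref{thm:ch4_soln} to prevent the error from compounding exponentially over $[0,T]$. This is also where the perturbation structure $\lambda^{(N)}=\sigma-\beta/\sqrt{N}$ enters the analysis, through the bias term $\int_0^t W_3(\mf{X}^{(N)}(s))\,ds$, which is precisely what produces the non--zero deterministic mean of the limiting OU process.
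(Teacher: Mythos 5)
Your proposal follows essentially the same route as the paper: decompose $\mf{Z}^{(N)}$ via \eqref{eq:chfour_SDEN2}, linearize $W$ around the mean-field so that $\sqrt{N}(W(\mf{X}^{(N)})-W(\mf{x}))=H(\mf{x})\mf{Z}^{(N)}+$ a remainder that is $O(N^{-1/2}\norm{\mf{Z}^{(N)}}_2^2)$, kill the remainder with Lemma~\ref{thm:ch4_transient_bound}, pass to the Gaussian limit of the martingale part via its predictable quadratic variation, and conclude by uniqueness from Theorem~\ref{thm:ch4_soln}. The paper packages the tightness and limit identification into a single application of Theorem~4.1 of Chapter~7 of Ethier--Kurtz rather than splitting them into Aldous's criterion plus the martingale FCLT plus the martingale problem, but that is a difference of bookkeeping, not of substance; your remainder bound is correct (for $d\ge 3$ one uses the a priori bound $\abs{\mf{Z}^{(N)}_n(s)}\le\sqrt{N}$ to reduce $N^{(1-d)/2}\abs{\mf{Z}^{(N)}_n}^d$ to $N^{-1/2}\abs{\mf{Z}^{(N)}_n}^2$, which is implicit in your $O(\cdot)$ statement and explicit in the paper's bound \eqref{eq:chfour_B}).

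The one genuine gap is at the very start: the hypothesis of the theorem is only $\mf{Z}^{(N)}(0)\Rightarrow\mf{Z}(0)$, which does \emph{not} imply $\limsup_{N\to\infty}\expect{\norm{\mf{Z}^{(N)}(0)}_2^2}<\infty$, and that second-moment condition is the hypothesis of Lemma~\ref{thm:ch4_transient_bound}, on which your entire control of the remainder $R^{(N)}$ rests. The paper bridges this by a truncation/coupling step: tightness of $\{\mf{Z}^{(N)}(0)\}$ gives a ball $\tilde{B}(r_\epsilon)$ containing $\mf{Z}^{(N)}(0)$ with probability at least $1-\epsilon$ uniformly in $N$, one runs the dynamics from a modified initial condition that is uniformly bounded and coincides with the original on that event, and then lets $\epsilon\to 0$. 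You should add this reduction (or strengthen the hypothesis to include uniform square-integrability of the initial fluctuations) before invoking Lemma~\ref{thm:ch4_transient_bound}. Also, your concern about needing to ``Gronwall past'' the remainder in the limit-identification step is unnecessary: once Lemma~\ref{thm:ch4_transient_bound} is available, $\int_0^T\norm{R^{(N)}(s)}_2\,ds\to 0$ in $L^1$ directly, and the Gronwall argument is only needed inside the proof of that lemma, not again here.
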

\begin{proof}
	See Section~\ref{app:ch4_transient}.
\end{proof}
\begin{remark}
	For a constant $a\in\mb{R}_+$, if $\lambda^{(N)}=a$ then 
	for $n\geq 1$,
	\nbeq
	\label{eq:chfour_w1w2w3_new}
	(W_1(\mf{b}))_n=a (b_{n-1}^d-b_n^d),\,\,(W_2(\mf{b}))_n=n (b_{n}-b_{n+1}),\,\,(W_3(\mf{b}))_n=0.
	\neeq
	As a result, from the SDE ~\eqref{eq:chfour_OU}, we recover the following SDE obtained in \cite{thiru_itc31} for the $\lambda^{(N)}=a$ case,
	\nbeq
	\label{eq:chfour_OU2}
	\mf{Z}(t)=\mf{Z}(0)+\int_{s=0}^tH(\mf{x}(s,\mf{u}))\mf{Z}(s) \,ds+\mf{M}(t).
	\neeq
\end{remark}

%%%%%%%%%%%%%%%%%%%%%%%%%%%%%%%%%%%%%%%%%%%%%%%%%%%%%%%%%%%%%%%%%%%%%%%%%%%%%%%%%%%%%%%%%%%%%%%%%%%%%%%%%%%%%%%%%%%%%%%%%%%%%%%%%%%%%%%%%%%%%%%%%%%%%%%%%%%%%%%%%%%%%%%%%%%%%%%%%%%%%%%%%%%%%%%%%%%%%%%%%%%%
\subsection{Stationary Regime}
\label{sec:chfour_stationary_results}
In this section, we present results pertaining to the stationary regime. In the stationary regime, the mean-field is located at $\bm{\pi}$, and hence we assume that $\mf{u}=\bm{\pi}$.  We recall that $\bm{\pi}$ satisfies 
\beq
W(\bm{\pi})=W_1(\bm{\pi})-W_2(\bm{\pi})=0.
\eeq

Our objective is to show that the sequence of processes $\{(\mf{Z}^{(N)}(t),t\geq 0)\}_{N\geq 1}$ converges to a limit in the stationary regime as $N\to\infty$ where $(\mf{Z}^{(N)}(t),t\geq 0)$ is the process defined in \eqref{eq:chfour_z_n}. First, we will show that the sequence $\{\mf{Z}^{(N)}(t)\}_{N\geq 1}$ is relatively compact in the stationary regime by using an another 
process $(\mf{Q}^{(N)}(t),t\geq 0)$ where
\beq
\mf{Q}^{(N)}(t)=\sqrt{N}(\mf{X}^{(N)}(t)-\bm{\pi}). 
\eeq
 We introduce an SDE \eqref{eq:chfour_OU_stn} and show that there exists a unique solution to this SDE with a unique invariant law. We then use this result to prove that any limit point of the process $(\mf{Z}^{(N)}(t),t\geq 0)$ as $N\to\infty$ in the stationary regime is a stationary OU process with the same invariant law as that of the solution of the proposed SDE \eqref{eq:chfour_OU_stn}. As a result, it would imply that $\{(\mf{Z}^{(N)}(t),t\geq 0)\}_{N\geq 1}$ converges as $N\to\infty$ in the stationary regime to the solution of \eqref{eq:chfour_OU_stn}.

Next, we state the exponential stability of the mean-field in Lemma~	\ref{thm:ch4_exponential}, which we use later in the proof of the subsequent result stated in Lemma~\ref{thm:ch4_stn_bound}.
\begin{lemma}
	\label{thm:ch4_exponential}
	There exists $\delta_1>0$ and $D_3<\infty$ such that for all $\mf{u}\in\mb{U}$,
	the mean-field $(\mf{x}(t,\mf{u}),t\geq 0)$ satisfies 
	\beq
	\norm{\mf{x}(t,\mf{u})-\bm{\pi}}_2\leq e^{-\delta_1 t}D_3\norm{\mf{u}-\bm{\pi}}_2.
	\eeq
	%\eit
\end{lemma}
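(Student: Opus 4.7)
The plan is to combine a local-linearization argument near $\boldsymbol{\pi}$ with the globally asymptotic stability already established in the mean-field analysis, exploiting the compactness of $\mathbb{U}$ to obtain a uniform constant $D_3$. Concretely: (i) show the Jacobian $H(\boldsymbol{\pi})$ is Hurwitz; (ii) deduce local exponential stability near $\boldsymbol{\pi}$; (iii) use global asymptotic stability plus compactness to funnel every trajectory into the local regime in a uniformly bounded time; (iv) stitch the two regimes into the stated bound.

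For step (i), I would show the tridiagonal matrix $H(\boldsymbol{\pi})$ is Hurwitz by symmetrizing via the positive diagonal similarity $D=\mathrm{diag}(d_1,\dots,d_C)$ with $d_{i+1}/d_i=\sqrt{\gamma_i/i}$ (well-defined since $\pi_i>0$, as follows from the fixed-point formula via $\widehat{\Xi}\circ\Theta$). The transformed matrix $D^{-1}H(\boldsymbol{\pi})D$ is symmetric tridiagonal with diagonal $-\nu_i=-(\gamma_i+i)$ and off-diagonals $\sqrt{i\gamma_i}$, hence real-spectrum. One can view it as the infinitesimal generator of a sub-Markov birth–death chain on $\{1,\dots,C\}$ killed at the boundary $i=C$ (the last column sum of $H(\boldsymbol{\pi})$ is strictly negative, as the $C+1\to C$ transitions leak mass out), and such a generator is negative definite. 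This yields a spectral gap $\delta>0$ and a constant $D'$ with $\|e^{tH(\boldsymbol{\pi})}\|_2\le D' e^{-\delta t}$. Step (ii) then follows by Taylor expanding $W$ around $\boldsymbol{\pi}$, writing $\mathbf{x}(t,\mathbf{u})-\boldsymbol{\pi}=e^{tH(\boldsymbol{\pi})}(\mathbf{u}-\boldsymbol{\pi})+\int_0^t e^{(t-s)H(\boldsymbol{\pi})}R(\mathbf{x}(s,\mathbf{u}))\,ds$ where $\|R(\mathbf{a})\|_2\le K\|\mathbf{a}-\boldsymbol{\pi}\|_2^2$, and applying Gronwall to obtain local exponential decay $\|\mathbf{x}(t,\mathbf{u})-\boldsymbol{\pi}\|_2\le 2D'e^{-\delta t}\|\mathbf{u}-\boldsymbol{\pi}\|_2$ for all $\mathbf{u}$ in some ball of radius $r>0$ around $\boldsymbol{\pi}$.

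For step (iii), since $\mathbb{U}$ is compact, $\boldsymbol{\pi}$ is globally asymptotically stable, and the flow is jointly continuous in $(t,\mathbf{u})$ (the vector field $W$ is Lipschitz), the map $\mathbf{u}\mapsto \inf\{t\ge0:\|\mathbf{x}(s,\mathbf{u})-\boldsymbol{\pi}\|_2<r/2\ \forall s\ge t\}$ is upper semicontinuous and finite on $\mathbb{U}$, hence bounded above by some $T<\infty$. Also $\sup_{\mathbf{u}\in\mathbb{U}}\sup_{t\ge 0}\|\mathbf{x}(t,\mathbf{u})-\boldsymbol{\pi}\|_2\le M$ for some $M$. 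To assemble the bound, split on $\mathbf{u}$: if $\|\mathbf{u}-\boldsymbol{\pi}\|_2<r$ use step (ii) directly; otherwise $\|\mathbf{u}-\boldsymbol{\pi}\|_2\ge r$ and for $t\le T$ the trivial bound $M\le (M/r)\|\mathbf{u}-\boldsymbol{\pi}\|_2$ gives the inequality up to the factor $e^{\delta_1 T}$, while for $t\ge T$ apply step (ii) starting from $\mathbf{x}(T,\mathbf{u})$ whose distance to $\boldsymbol{\pi}$ is at most $r/2\le (r/2)\|\mathbf{u}-\boldsymbol{\pi}\|_2/r$. Setting $\delta_1\in(0,\delta)$ and $D_3:=\max\bigl(2D',\,(M/r)e^{\delta_1 T},\,D'e^{\delta_1 T}/2\bigr)$ yields the stated bound uniformly over $\mathbb{U}$.

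The main obstacle is step (i): verifying that $H(\boldsymbol{\pi})$ is Hurwitz. A naive Gershgorin estimate on the unsymmetrized matrix fails because the row sums are nonnegative. The symmetrization above is what turns the analysis into a clean statement about a strictly sub-stochastic birth–death generator; the strict negativity of the spectrum then reduces to the fact that probability mass is lost at state $C$ (reflected in the $-1-\gamma_C$ column sum), which is an irreducibility-plus-killing argument. All other components of the proof are standard once this spectral fact is in hand.
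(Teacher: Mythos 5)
Your proposal is correct in outline, but it follows a genuinely different route from the paper. The paper's proof is a global monotonicity argument: it uses the quasi-monotonicity of the mean-field semiflow to sandwich $\mf{x}(t,\mf{v})$ between the trajectories started from $\min(\mf{v},\bm{\pi})$ and $\max(\mf{v},\bm{\pi})$, invokes a known contraction estimate (Lemma~4 of \cite{xie}) giving $\norm{\mf{y}(t,\mf{w})-\bm{\pi}}_1\leq e^{-t}\norm{\mf{w}-\bm{\pi}}_1$ for ordered initial points, splits the coordinates according to the sign of $y_i(t,\mf{v})-\pi_i$ to combine the two bounds, and finishes by equivalence of the $\norm{\cdot}_1$ and $\norm{\cdot}_2$ norms; this yields the explicit rate $\delta_1=1$ in one step with no linearization and no entrance-time argument. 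Your route instead runs the standard Lyapunov-indirect-method machinery: symmetrize $H(\bm{\pi})$ by a diagonal similarity to see it is Hurwitz (essentially the same spectral fact the paper proves separately, via the sub-Markov birth--death generator interpretation, for Lemma~\ref{thm:ch4_A_stability}), deduce local exponential stability from the variation-of-constants formula with a quadratic remainder, and use global asymptotic stability plus compactness of $\mb{U}$ to obtain a uniform entrance time $T$ before stitching the two regimes. Your approach is more generic --- it would apply to any hyperbolic, globally attracting fixed point without the order-preservation structure --- but it produces a non-explicit rate (the spectral gap), requires the Hurwitz verification as a prerequisite rather than a by-product, and leans on the global asymptotic stability of $\bm{\pi}$ as an external input (the paper asserts this from \cite{arpan}, where it is itself established by the same monotonicity techniques, so your argument is not circular within the paper's framework but does not make that dependence disappear). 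Two small imprecisions worth noting: the killing in the sub-Markov chain occurs at rate $1$ in every state (all column sums of $H(\bm{\pi})$ equal $-1$ except the last, which is $-1-\gamma_C$), not only at the boundary $i=C$ as you state, though either description suffices for strict negativity of the spectrum; and your final constant in the $t\geq T$ branch should be $D'e^{\delta T}$ rather than $D'e^{\delta_1 T}/2$, a harmless bookkeeping slip.
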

\begin{proof}
	See Section~\ref{app:ch4_exponential}.
\end{proof}

The following result shows the tightness of $\{\mf{Z}^{(N)}(t)\}_{N\geq 1}$ in the stationary regime.
\begin{lemma} 
	
	\label{thm:ch4_stn_bound}
	If $\limsup_{N\to\infty}\expect{\norm{\mf{Q}^{(N)}(0)}_2^2}<\infty$, then \beq
	\limsup_{N\to\infty}\sup_{t\geq 0}\expect{\norm{\mf{Q}^{(N)}(t)}_2^2}<\infty.
	\eeq
	Consequently, in the stationary regime corresponding to $t=\infty$, we have 
	\beq
	\limsup_{N\to\infty}\expect{\norm{\mf{Z}^{(N)}(\infty)}_2^2}<\infty.
	\eeq
\end{lemma}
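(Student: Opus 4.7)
The plan is to set up a Lyapunov-type differential inequality for $\expect{V(\mf{Q}^{(N)}(t))}$, where $V$ is a quadratic form adapted to the linear stability of $H(\bm{\pi})$, and then integrate it. Throughout, since $\mf{u}=\bm{\pi}$ the mean-field is constant and so $\mf{Z}^{(N)}\equiv\mf{Q}^{(N)}$. First I would specialize the semimartingale decomposition \eqref{eq:chfour_SDEN2} to this case: using $\mf{x}(t,\bm{\pi})=\bm{\pi}$ and $W(\bm{\pi})=0$ gives
\[
\mf{Q}^{(N)}(t)=\mf{Q}^{(N)}(0)+\int_0^t\sqrt{N}W(\mf{X}^{(N)}(s))\,ds-\int_0^tW_3(\mf{X}^{(N)}(s))\,ds+\mf{M}^{(N)}(t).
\]
Because each coordinate of $W$ is a polynomial of degree at most $d$, Taylor's theorem around $\bm{\pi}$ yields $W(\mf{X}^{(N)}(s))=H(\bm{\pi})(\mf{X}^{(N)}(s)-\bm{\pi})+R(\mf{X}^{(N)}(s)-\bm{\pi})$ with $\norm{R(v)}_2\leq K\norm{v}_2^2$ uniformly on the compact set $\mb{U}-\bm{\pi}$. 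After multiplying by $\sqrt{N}$ the linear piece becomes $H(\bm{\pi})\mf{Q}^{(N)}(s)$, while the nonlinear piece satisfies $\norm{\sqrt{N}R(\mf{X}^{(N)}(s)-\bm{\pi})}_2\leq K\norm{\mf{X}^{(N)}(s)-\bm{\pi}}_2\norm{\mf{Q}^{(N)}(s)}_2$.

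The exponential stability asserted by Lemma~\ref{thm:ch4_exponential}, linearized at the fixed point $\bm{\pi}$, forces the spectrum of $H(\bm{\pi})$ to lie strictly in the open left half-plane, so standard Lyapunov theory supplies $\delta>0$ and a symmetric positive-definite matrix $P$ with $H(\bm{\pi})^{\top}P+PH(\bm{\pi})=-2\delta P$. Setting $V(\mf{q})=\ip{\mf{q}}{P\mf{q}}$ and applying It\^o's formula to $V(\mf{Q}^{(N)}(t))$, I would take expectation and combine the contractive drift with the uniform bound on $W_3$ over $\mb{U}$ and the uniformly bounded quadratic variation from \eqref{eq:chfour_covN} to obtain
\[
\frac{d}{dt}\expect{V(\mf{Q}^{(N)}(t))}\leq-2\delta\expect{V(\mf{Q}^{(N)}(t))}+2K\norm{P}_2\expect{\norm{\mf{X}^{(N)}(t)-\bm{\pi}}_2\norm{\mf{Q}^{(N)}(t)}_2^2}+K_2\expect{\norm{\mf{Q}^{(N)}(t)}_2}+K_3.
\]

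The main obstacle is the nonlinear term $\expect{\norm{\mf{X}^{(N)}(t)-\bm{\pi}}_2\norm{\mf{Q}^{(N)}(t)}_2^2}$, which can compete with the negative drift because $\norm{\mf{X}^{(N)}-\bm{\pi}}_2$ is only deterministically bounded (not small) on $\mb{U}$. I would handle it by a localization argument: fix $\eta>0$ small enough that $2K\norm{P}_2\eta<\delta\lambda_{\min}(P)$ so that on the event $A_\eta(t):=\{\norm{\mf{X}^{(N)}(t)-\bm{\pi}}_2\leq\eta\}$ the nonlinear contribution is absorbed into a fraction of the negative drift. On $A_\eta(t)^c$ the worst-case bound $\norm{\mf{Q}^{(N)}(t)}_2^3=O(N^{3/2})$ is controlled by showing that $\mb{P}(A_\eta(t)^c)$ decays exponentially in $N$, which follows from combining Lemma~\ref{thm:ch4_exponential} with standard exponential concentration of the Poisson-driven empirical process around its mean-field limit. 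After this localization the inequality reduces, for $N$ sufficiently large, to $\frac{d}{dt}\expect{V(\mf{Q}^{(N)}(t))}\leq-\delta\expect{V(\mf{Q}^{(N)}(t))}+K_4$, and Gronwall gives $\sup_{t\geq 0}\expect{V(\mf{Q}^{(N)}(t))}\leq\expect{V(\mf{Q}^{(N)}(0))}+K_4/\delta$; equivalence of $V$ with $\norm{\cdot}_2^2$ then delivers the first conclusion.

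For the stationary consequence, I would start the chain from any deterministic state within $O(1/\sqrt{N})$ of $\bm{\pi}$, so that $\norm{\mf{Q}^{(N)}(0)}_2$ is deterministically bounded and the hypothesis of the lemma holds. Because the chain is irreducible on a finite state space it converges in distribution to its unique stationary law $\Pi^{(N)}$ as $t\to\infty$, and since $\mf{X}\mapsto N\norm{\mf{X}-\bm{\pi}}_2^2$ is a bounded continuous functional on $\mb{U}$ for each fixed $N$, bounded convergence yields $\expect{\norm{\mf{Q}^{(N)}(\infty)}_2^2}=\lim_{t\to\infty}\expect{\norm{\mf{Q}^{(N)}(t)}_2^2}\leq\sup_{t\geq 0}\expect{\norm{\mf{Q}^{(N)}(t)}_2^2}$. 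Since $\mf{Z}^{(N)}(\infty)=\mf{Q}^{(N)}(\infty)$ when $\mf{u}=\bm{\pi}$, the second claim follows from the first.
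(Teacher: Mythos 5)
Your overall architecture (quadratic Lyapunov function for $H(\bm{\pi})$, It\^o/Dynkin on $V(\mf{Q}^{(N)}(t))$, differential inequality, Gronwall) is a reasonable alternative to what the paper does, and your treatment of the stationary consequence is fine. But there is a genuine gap at the step you yourself flag as "the main obstacle." Your negative drift comes only from the \emph{linearization} at $\bm{\pi}$, so it is only effective where the quadratic remainder $R$ is dominated, i.e.\ on $\{\norm{\mf{X}^{(N)}(t)-\bm{\pi}}_2\leq\eta\}$. To control the complement you assert that $\mb{P}(\norm{\mf{X}^{(N)}(t)-\bm{\pi}}_2>\eta)$ decays exponentially in $N$, \emph{uniformly in $t\geq 0$}, as a "standard" consequence of Lemma~\ref{thm:ch4_exponential} plus Poisson concentration. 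That is not standard: finite-horizon large-deviation bounds for density-dependent jump processes give $\mb{P}(\sup_{t\leq T}\norm{\mf{X}^{(N)}(t)-\mf{x}(t,\mf{u})}_2>\eta)\leq e^{-cN}$ only for fixed $T$, and upgrading this to a bound uniform over all $t\geq 0$ requires an iteration over time windows combined with the global stability of the mean-field --- an argument of essentially the same difficulty and shape as the lemma you are trying to prove. Moreover, the hypothesis only gives $L^2$ control of $\mf{Q}^{(N)}(0)$, not an exponential tail at time $0$, so even the first window needs care. As written, the localization step is an unproven claim on which the whole bound rests (on $A_\eta(t)^c$ the offending term is of order $N$, so you genuinely need a $o(1/N)$, uniform-in-$t$ tail bound; nothing weaker will do).

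The paper's proof avoids this entirely, and the contrast is instructive. Instead of linearizing, it writes $\mf{Q}^{(N)}(t_0+h)=\mf{Z}^{(N)}(t_0,h)+\sqrt{N}\bigl(\mf{y}(h,\mf{X}^{(N)}(t_0))-\bm{\pi}\bigr)$, where $\mf{y}(\cdot,\mf{v})$ is the \emph{full nonlinear} mean-field flow restarted from the random state $\mf{X}^{(N)}(t_0)$. Lemma~\ref{thm:ch4_exponential} gives exponential contraction of that flow toward $\bm{\pi}$ \emph{globally on $\mb{U}$}, so the second term is bounded by $e^{-\delta_1 h}D_3\norm{\mf{Q}^{(N)}(t_0)}_2$ with no smallness restriction on $\mf{X}^{(N)}(t_0)-\bm{\pi}$; the first term is controlled on a window $[0,T]$ by Gronwall plus Doob applied to the martingale increment, uniformly in $t_0$. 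Choosing $T$ so that $3e^{-2\delta_1 T}D_3^2<1$ and iterating over windows then gives the uniform-in-time second-moment bound. If you want to salvage your Lyapunov route, the cleanest fix is to replace the localization by exactly this device: transfer the contraction burden from the linear operator $H(\bm{\pi})$ to the nonlinear flow, for which a global contraction estimate is already available.
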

\begin{proof}
	See Section~\ref{app:ch4_stn_bound}.
\end{proof}

Next, we state the SDE that is used to obtain the limit of the sequence $\{(\mf{Z}^{(N)}(t),t\geq 0)\}_{N\geq 1}$ in the stationary regime. First, we linearize \eqref{eq:chfour_mfe3} around $\bm{\pi}$ to obtain a process $(\mf{s}(t), t\geq 0)$ satisfying
\beq
\label{eq:chfour_A}
\frac{d\mf{s}(t)}{dt}=H(\bm{\pi})\mf{s}(t).
\eeq

Let $\mf{B}(t)=(\mf{B}_i(t),0\leq i\leq C)$ with $\mf{B}_0(t)=0$ where $\{(\mf{B}_i(t),t\geq 0)\}_{0\leq i\leq C}$ are independent centered Brownian motions and $\expect{\mf{B}_i^2(1)}=V_i=var(\mf{B}_i(1))=2i(\pi_i-\pi_{i+1})$, $i\geq 1$. The infinitesimal covariance matrix of $(\mf{B}(t),t\geq 0)$ is the diagonal matrix diag($\mf{V})$, where $\mf{V}=(V_n,0\leq n\leq C)$.
From \eqref{eq:chfour_variation1}, the martingales $(\mf{M}(t),t\geq 0)$ with $\mf{x}(0,\bm{\pi})=\bm{\pi}$ has the same law as $(\mf{B}(t),t\geq 0)$. 
Now we define the following SDE which is used to study the process $(\mf{Z}(t),t\geq 0)$ in the stationary regime.
\begin{defn}{An SDE for the Stationary Regime:}
	Let  $(\mf{Q}(t),t\geq 0)$ be a solution to the following SDE,
	\beq
	\label{eq:chfour_OU_stn}
	\mf{Q}(t)=\mf{Q}(0)+\int_{s=0}^tH(\bm{\pi})\mf{Q}(s)\,ds-\int_{s=0}^tW_3(\bm{\pi})\,ds+\mf{B}(t).
	\eeq
\end{defn}
Then \eqref{eq:chfour_OU_stn} defines an OU process whose drift and variance depend on $\bm{\pi}$. %We will show later in this section, the limit of $(\mf{z}^{(N)}(t),t\geq)$ in the stationary regime as $N\to\infty$ is the unique stationary OU process solving the SDE \eqref{eq:chfour_OU_stn}.

For an arbitrary $\mf{Q}(0)$ in \eqref{eq:chfour_OU_stn}, we have the following result and the proof follows by the same arguments as in the proof of Theorem~\ref{thm:ch4_soln}. Hence, we omit the proof.
\bt
\label{thm:ch4_soln_stn}
We show
\benum
\item $\norm{H(\bm{\pi})}_2$ is bounded.
\item If $\expect{\norm{\mf{Q}(0)}_2^2}<\infty$, then $(\mf{Q}(t),t\geq 0)$ given by \beq
\mf{Q}(t)=e^{H(\bm{\pi})t}\mf{Q}(0)-\int_{s=0}^te^{H(\bm{\pi})(t-s)}W_3(\bm{\pi})\,ds+\int_{s=0}^te^{H(\bm{\pi})(t-s)}\,d\mf{B}(s), \eeq
is the unique strong solution to \eqref{eq:chfour_OU_stn}.
Furthermore,
\beq
\expect{\sup_{t\leq T}\norm{\mf{Q}(t)}_2^2}<\infty.
\eeq
\eenum
\et

We point out that the transpose $H(\bm{\pi})^*$ of $H(\bm{\pi})$ is the generator of a finite state birth-death process and the birth, death, and killing rates in state $i$ ($1\leq i\leq C$) are  $\gamma_i$, $i-1$, and $1$, respectively. Let $\mc{I}$ be the identity matrix of dimension $C\times C$.
Then since $H(\bm{\pi})^*+\mc{I}$ is the generator of a birth-death process with zero killing rates, all the eigenvalues of $H(\bm{\pi})^*+\mc{I}$ are negative \cite{Ledermann}. Hence, all the eigenvalues of $H(\bm{\pi})$ are less than $-1$. As a consequence, we have the following result due to the fact that all the eigenvalues are negative.
\begin{lemma}
	\label{thm:ch4_A_stability}    
	The unique solution to \eqref{eq:chfour_A} is given by $(\mf{s}(t),t\geq 0)$ where $\mf{s}(t)=e^{H(\bm{\pi})t}\mf{s}(0)$. Furthermore,  $(\mf{s}(t),t\geq 0)$ satisfies that for some $\delta_2>0$ and $D_4<\infty$,
	\beq
	\norm{\mf{s}(t)}_2\leq e^{-\delta_2 t}D_4\norm{\mf{s}(0)}_2.
	\eeq
\end{lemma}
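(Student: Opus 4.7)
The first assertion---that $\mf{s}(t) = e^{H(\bm{\pi})t}\mf{s}(0)$ is the unique solution of the linear autonomous ODE \eqref{eq:chfour_A}---is immediate from the standard theory of linear ODEs on the finite-dimensional space $\mb{V}\cong\mb{R}^{C}$, since $H(\bm{\pi})$ is a bounded linear operator by Theorem~\ref{thm:ch4_soln_stn}(1). I would dispose of this in a single sentence and devote the remainder of the proof to the exponential estimate.

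The key spectral input has already been extracted in the paragraph preceding the lemma: $H(\bm{\pi})^{*}+\mc{I}$ is the generator of a finite-state birth--death chain without killing, so by \cite{Ledermann} its spectrum is strictly negative, which translates to $\mathrm{Re}(\mu)<-1$ for every eigenvalue $\mu$ of $H(\bm{\pi})$. Set $\lambda^{*}:=\max\{\mathrm{Re}(\mu):\mu\in\mathrm{spec}(H(\bm{\pi}))\}<-1$ and fix any $\delta_{2}\in(0,-\lambda^{*})$. My plan is to pass to the Jordan canonical decomposition $H(\bm{\pi})=PJP^{-1}$ and bound
\begin{equation*}
\norm{e^{H(\bm{\pi})t}}_{2}\;\leq\;\norm{P}_{2}\,\norm{P^{-1}}_{2}\,\norm{e^{Jt}}_{2}.
\end{equation*}
For each Jordan block of size $k$ with eigenvalue $\mu$, one has $\norm{e^{J_{\mu}t}}_{2}\leq e^{\mathrm{Re}(\mu)t}\,q_{k}(t)$ where $q_{k}(t)=\sum_{j=0}^{k-1}t^{j}/j!$; since $q_{k}(t)\leq C_{\epsilon}e^{\epsilon t}$ for any $\epsilon>0$, choosing $\epsilon\in(0,-\lambda^{*}-\delta_{2})$ absorbs the polynomial prefactor and yields a finite constant $K$ with $\norm{e^{Jt}}_{2}\leq Ke^{-\delta_{2}t}$ for all $t\geq 0$. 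Combining these estimates with $\mf{s}(t)=e^{H(\bm{\pi})t}\mf{s}(0)$ gives the claimed bound with $D_{4}:=K\norm{P}_{2}\norm{P^{-1}}_{2}$.

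There is no real obstacle here, as the spectral analysis has been gifted by the birth--death structure; the only subtlety is the polynomial prefactor in non-diagonal Jordan blocks, which forces one to take $\delta_{2}$ strictly smaller than the spectral abscissa $-\lambda^{*}$. Should one wish to obtain a more quantitative constant without invoking the Jordan decomposition, an equivalent route is to solve the Lyapunov equation $H(\bm{\pi})^{\top}P+PH(\bm{\pi})=-\mc{I}$ for a positive-definite $P$ (which exists since $H(\bm{\pi})$ is Hurwitz) and use $V(\mf{s})=\mf{s}^{\top}P\mf{s}$ as a Lyapunov function. Differentiating along trajectories gives $\dot{V}(\mf{s}(t))=-\norm{\mf{s}(t)}_{2}^{2}\leq -V(\mf{s}(t))/\lambda_{\max}(P)$, and Gr\"onwall combined with the equivalence $\lambda_{\min}(P)\norm{\mf{s}}_{2}^{2}\leq V(\mf{s})\leq \lambda_{\max}(P)\norm{\mf{s}}_{2}^{2}$ produces the stated bound with $\delta_{2}=1/(2\lambda_{\max}(P))$ and $D_{4}=\sqrt{\lambda_{\max}(P)/\lambda_{\min}(P)}$.
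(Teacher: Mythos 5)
Your proposal is correct and follows essentially the same route as the paper, which simply asserts the lemma as a consequence of the fact (established in the preceding paragraph via the birth--death interpretation of $H(\bm{\pi})^{*}+\mc{I}$) that all eigenvalues of $H(\bm{\pi})$ are strictly negative. Your Jordan-form argument, with the correct observation that the polynomial prefactor forces $\delta_{2}$ strictly below the spectral abscissa, is the standard way to make that assertion precise, and the Lyapunov-equation alternative is an equally valid filling-in of the same omitted details.
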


From Lemma~\ref{thm:ch4_A_stability} and the unique solution given in Theorem~\ref{thm:ch4_soln_stn}, the following result follows immediately.  Hence, we omit the proof.
\bt
\label{thm:ch4_invariant_stn}
The unique solution of \eqref{eq:chfour_OU_stn} as $t\to\infty$ has the invariant law coinciding with the law of a stationary Gaussian process with mean $\int_{0}^{\infty}e^{H(\bm{\pi})s}W_3(\bm{\pi})\,ds$ and covariance matrix $\int_{0}^{\infty}e^{H(\bm{\pi})s}\text{diag}(\mf{V})e^{H(\bm{\pi})^*s}\,ds$.

%its unique invariant law as $t\to\infty$. The invariant law is the law of the Gaussian centered process $\int_{0}^{\infty}e^{At}\,d\mf{B}(t)$ that has the covariance matrix $\int_{0}^{\infty}e^{At}\text{diag}(\mf{V})e^{A^*t}\,dt$. In stationary, there is a unique solution to \eqref{eq:chfour_OU_stn}.
\et

We are now ready to state the main result on the FCLT for the stationary regime.
\bt
\label{thm:ch4_clt_stn}
Under the assumption that the system with index $N$ is in the stationary regime,  the sequence $\{(\mf{Z}^{(N)}(t),t\geq 0)\}_{N\geq 1}$ as $N\to\infty$ converges in law to the unique stationary OU process which solves \eqref{eq:chfour_OU_stn}. 

Furthermore, the limit of the sequence $\{(\mf{Z}^{(N)}(0))\}_{N\geq 1}$ in the stationary regime has the same law as the invariant law of the solution of \eqref{eq:chfour_OU_stn}.
\et
\begin{proof}
	See Section~\ref{app:ch4_clt_stn}.
\end{proof}

Now we use Theorem~\ref{thm:ch4_clt_stn} to provide an approximation to the average blocking probability in the system with $N$ servers. 
\begin{theorem}
	\label{thm:ch4_blocking}    
	Let $P_{block}^{(N)}$ be the average blocking probability in the system with $N$ servers, then
	\beq
	P_{block}^{(N)}=\pi_C^d-\frac{1}{\sigma\sqrt{N}}\left(\sum_{i=0}^C i(\kappa_i-\kappa_{i+1})\right)-\frac{\beta}{\sigma\sqrt{N}}(1-\pi_C^d)+o(N^{-\frac{1}{2}}),\label{eq:chfour_block_approx}
	\eeq
	where the vector $\mf{\kappa}=(\kappa_i,0\leq i\leq C)=\int_{0}^{\infty}e^{H(\bm{\pi})s}W_3(\bm{\pi})\,ds$ is the mean of the unique solution of \eqref{eq:chfour_OU_stn} in the stationary regime. 
\end{theorem}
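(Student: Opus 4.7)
My plan is to avoid any Taylor expansion of $(\mf{X}^{(N)}_C)^d$ and instead combine an exact rate-balance identity in the $N$-server stationary regime with the FCLT of Theorem~\ref{thm:ch4_clt_stn}. Since arrivals are Poisson and the $d$ sampled servers are i.i.d.\ (sampling with replacement), PASTA identifies $P_{block}^{(N)}$ with a time-stationary quantity, but instead of expanding $\mathbb{E}[(\mf{X}^{(N)}_C(\infty))^d]$ directly, I will insert the decomposition $\mf{X}^{(N)}(\infty)=\bm{\pi}+N^{-1/2}\mf{Z}^{(N)}(\infty)$ into the flow-balance equations and invoke the fixed-point identity \eqref{eq:chfour_fixed_pt} to convert the problem into one of controlling the first moments $\mathbb{E}[\mf{Z}^{(N)}_i(\infty)]$, which I will show converge to $\kappa_i$ using Theorem~\ref{thm:ch4_clt_stn} together with Lemma~\ref{thm:ch4_stn_bound}.

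\textbf{Main derivation.} First I would take expectations in \eqref{eq:chfour_evolutions} (equivalently, apply the generator to the coordinate map $\mf{b}\mapsto b_i$) and use that the derivative of $\mathbb{E}[\mf{X}^{(N)}_i(\infty)]$ vanishes in stationarity to obtain, for each $1\le i\le C$,
\[\lambda^{(N)}\,\mathbb{E}\bigl[(\mf{X}^{(N)}_{i-1}(\infty))^d-(\mf{X}^{(N)}_i(\infty))^d\bigr]=i\,\mathbb{E}\bigl[\mf{X}^{(N)}_i(\infty)-\mf{X}^{(N)}_{i+1}(\infty)\bigr].\]
Summing over $i=1,\ldots,C$ telescopes the left side to $\lambda^{(N)}(1-\mathbb{E}[(\mf{X}^{(N)}_C(\infty))^d])=\lambda^{(N)}(1-P_{block}^{(N)})$, since PASTA and independent $d$-sampling yield $P_{block}^{(N)}=\mathbb{E}[(\mf{X}^{(N)}_C(\infty))^d]$. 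In parallel, summing \eqref{eq:chfour_fixed_pt} over $i=1,\ldots,C$ gives the telescoping identity $\sum_{i=1}^C i(\pi_i-\pi_{i+1})=\sigma(1-\pi_C^d)$. Substituting $\mathbb{E}[\mf{X}^{(N)}_i(\infty)]=\pi_i+N^{-1/2}\mathbb{E}[\mf{Z}^{(N)}_i(\infty)]$ in the balance, subtracting the fixed-point sum, and using $\lambda^{(N)}=\sigma-\beta/\sqrt N$ should then rearrange to the exact identity
\[P_{block}^{(N)}=\pi_C^d-\frac{\beta}{\sigma\sqrt N}(1-P_{block}^{(N)})-\frac{1}{\sigma\sqrt N}\sum_{i=1}^C i\bigl(\mathbb{E}[\mf{Z}^{(N)}_i(\infty)]-\mathbb{E}[\mf{Z}^{(N)}_{i+1}(\infty)]\bigr).\]

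\textbf{Passing to the limit and main obstacle.} The mean-field limit together with the exchange of limits \eqref{eq:chfour_exchange_limits} gives $P_{block}^{(N)}\to\pi_C^d$, so $1-P_{block}^{(N)}=1-\pi_C^d+o(1)$. Theorem~\ref{thm:ch4_clt_stn} yields $\mf{Z}^{(N)}(\infty)\Rightarrow\mf{Z}(\infty)$ with $\mathbb{E}[\mf{Z}(\infty)]=\bs{\kappa}$ by Theorem~\ref{thm:ch4_invariant_stn}. The hard part will be upgrading this distributional convergence to convergence of first moments $\mathbb{E}[\mf{Z}^{(N)}_i(\infty)]\to\kappa_i$, because the very correction term I am isolating is of order $N^{-1/2}$ and cannot tolerate any multiplicative error in this passage. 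This is precisely where Lemma~\ref{thm:ch4_stn_bound} becomes indispensable: it furnishes $\sup_N\mathbb{E}\|\mf{Z}^{(N)}(\infty)\|_2^2<\infty$, which implies uniform integrability of each coordinate and therefore convergence of means. Inserting these limits into the identity above, and noting that the $i=0$ summand vanishes so the sum may be extended to $i=0,\ldots,C$ with the convention $\kappa_{C+1}=0$, yields \eqref{eq:chfour_block_approx} with the stated $o(N^{-1/2})$ remainder.
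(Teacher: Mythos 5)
Your proposal is correct and follows essentially the same route as the paper: the paper obtains your telescoped rate-balance identity $\lambda^{(N)}(1-P_{block}^{(N)})=\sum_n n\,\mathbb{E}[\mf{X}^{(N)}_n(\infty)-\mf{X}^{(N)}_{n+1}(\infty)]$ directly from Little's law (rather than from the stationary generator), and then, exactly as you do, combines the fixed-point identity $\sum_n n(\pi_n-\pi_{n+1})=\sigma(1-\pi_C^d)$ with the convergence $\mathbb{E}[\mf{Z}^{(N)}(\infty)]\to\bs{\kappa}$, which is upgraded from the weak convergence of Theorem~\ref{thm:ch4_clt_stn} via the uniform integrability supplied by Lemma~\ref{thm:ch4_stn_bound}. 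The only cosmetic difference is your final algebra (keeping $1-P_{block}^{(N)}$ on the right and using $P_{block}^{(N)}\to\pi_C^d$ instead of expanding $1/\lambda^{(N)}$), which yields the same expression.
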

\begin{proof}
	See Section~\ref{app:ch4_blocking}.
\end{proof}
\begin{remark} From Theorem~\ref{thm:ch4_blocking}, we have
	\benum
	\item If $\sigma=C$, the result \eqref{eq:chfour_block_approx} corresponds to the Halfin-Whitt regime. In this case, we have
	\nbeq
	\lim_{N\to\infty} \sqrt{N}(P_{block}^{(N)}-\pi_C^d)=-\frac{1}{\sigma}\left(\sum_{i=0}^C i(\kappa_i-\kappa_{i+1})\right)-\frac{\beta}{\sigma}(1-\pi_C^d).
	\neeq
	\item If $\beta=0$, then $W_3(\bm{\pi})=0$. As a result, 
	\nbeq
	\lim_{N\to\infty}\sqrt{N}(P_{block}^{(N)}-\pi_C^d)=0.
	\neeq
	\eenum
	The results for the case $\beta=0$ were presented in \cite{thiru_itc31}.
	% For this case, we can also apply the results of \cite{gast_expected_values} to conclude that $P_{block}^{(N)}-\pi_C^d=O(\frac{1}{N})$. In \cite{gast_expected_values}, an FCLT result was not studied..
\end{remark}

The significance of Theorem~\ref{thm:ch4_blocking} is that although the exact blocking formula for $P_{block}^{(N)}$ is not known is also  difficult to characterize due to complex interactions between servers, when $N$ becomes large we can compute approximations to the blocking probability as a function of $\bm{\pi}$, $\beta$, $N$, $\sigma$, and $C$. 
%%%%%%%%%%%%%%%%%%%%%%%%%%%%%%%%%%%%%%%%%%%
%%%%%%%%%%%%%%%%%%%%%%%%%%%%%%%%%%%%%%%%%%%%%%%
%%%%%%%%%%%%%%%%%%%%%%%%%%%%%%%%%%%%%%%%%%%%%%%%%%%
%%%%%%%%%%%%%%%%%%%%%%%%%%%%%%%%%%%%%%%%%%%%%%%
%%%%%%%%%%%%%%%%%%%%%%%%%%%%%%%%%%%%%%%%%%%%%%%

%\begin{bibliography}
%\bibliographystyle{IEEEtran}
%\printbibliography
%\bibliography{reference_loss_itc_2019}
%\end{bibliography}

%&&&&&&&&&&&&&&&&&&&&&&&&&&&&&&&&&&&&&&&&&&&&&&&&&&&&&&&&&&&&
%&&&&&&&&&&&&&&&&&&&&&&&&&&&&&&&&&&&&&&&&&&&&&&&&&&&&&&&&&&&&&&
%&&&&&&&&&&&&&&&&&&&&&&&&&&&&&&&&&&&&&&&&&&&&&&&&&&&&&&&&&&&&&&&&
\section{Proofs of Main Results}
\label{sec:chfour_proofs}
\subsection{Proof of Lemma~\ref{thm:ch4_transient_bound}}
\label{app:ch4_transient_bound}

We have
\beq
\label{eq:chfour_SDEN}
\mf{Z}^{(N)}(t)=\mf{Z}^{(N)}(0)+\int_{s=0}^t\sqrt{N}(W(\mf{X}^{(N)}(s))-W(\mf{x}(s,\mf{u})))\,ds-\int_{s=0}^tW_3(\mf{X}^{(N)}(s))\,ds
+\mf{M}^{(N)}(t)
\eeq
and for $n\geq 1$,
\nbeq
<\mf{M}^{(N)}_n>_t=\int_{s=0}^t((W_1(\mf{X}^{(N)}(s)))_n+(W_2(\mf{X}^{(N)}(s)))_n-\frac{1}{\sqrt{N}}(W_3(\mf{X}^{(N)}(s)))_n)\,ds.
\neeq
From \eqref{eq:chfour_SDEN}, we obtain
\nbeq
\norm{\mf{Z}^{(N)}(t)}_2\leq \norm{\mf{Z}^{(N)}(0)}_2+B_W\int_{s=0}^t\norm{\mf{Z}^{(N)}(s)}_2\,ds+2C\beta t
+\norm{\mf{M}^{(N)}(t)}_2.
\neeq
By using the Gronwall's Lemma,
\nbeq
\norm{\mf{Z}^{(N)}(t)}_2\leq (\norm{\mf{Z}^{(N)}(0)}_2
+2C\beta t+\norm{\mf{M}^{(N)}(t)}_2)e^{B_Wt}.
\neeq
As a result, we get
\nbeq
\norm{\mf{Z}^{(N)}(t)}_2^2\leq 3(\norm{\mf{Z}^{(N)}(0)}_2^2+4C^2\beta^2 t^2
+\norm{\mf{M}^{(N)}(t)}_2^2)e^{2B_Wt}.
\neeq
For $T>0$, we have
\nbeq
\sup_{0\leq t\leq T}\norm{\mf{Z}^{(N)}(t)}_2^2\leq 3(\norm{\mf{Z}^{(N)}(0)}_2^2+4C^2\beta^2T^2
+\sup_{0\leq t\leq T}\norm{\mf{M}^{(N)}(t)}_2^2)e^{2B_WT}.
\neeq
Finally, the Doob's inequality implies the following inequality
\nbeq
\expect{\sup_{0\leq t
		\leq T}\norm{\mf{Z}^{(N)}(t)}_2^2}
\leq 3e^{2B_WT}\Big(\expect{\norm{\mf{Z}^{(N)}(0)}_2^2}+4C^2\beta^2T^2
+4\expect{\sum_{i=1}^C<\mf{M}^{(N)}_i>_T}\Big).
\neeq
Since $\sup_{N\geq 1}\expect{\sum_{i=1}^C<\mf{M}^{(N)}_i>_T}<\infty$, we conclude that \nbeq\limsup_{N\to\infty}\expect{\sup_{0\leq t
		\leq T}\norm{\mf{Z}^{(N)}(t)}_2^2}<\infty. 
\neeq

%	Let $W$ be the operator defined as $W=W_1-W_2$.
%	It can be checked that by applying the Dynkin formula\cite{Ethier_Kurtz_book}, we can define an independent square-integrable martingales $(\mf{M}^{(N)}(t),t\ge 0)=\{(M^{(N)}_i(t),t\geq 0)\}_{(i\in{0,1,\cdots,C)}}$ such that $(\mf{M}^{(N)}(t),t\ge 0)$ is independent of $\mf{z}^{(N)}(0)$ satisfying
%	\begin{multline}
%	\label{eq:chfour_SDEN}
%	\mf{z}^{(N)}(t)=\mf{z}^{(N)}(0)+\int_{s=0}^t\sqrt{N}(W(\mf{x}^{(N)}(s))-W(\mf{x}(s)))\,ds\\
%	+\mf{M}^{(N)}(t),
%	\end{multline}
%	and for $i\geq 1$,
%	\beq
%	<M^{(N)}_i>_t=\int_{s=0}^t(W_1(\mf{x}^{(N)}(s))+W_2(\mf{x}^{(N)}(s)))\,ds.
%	\eeq

%Then from \eqref{eq:chfour_SDEN}, by applying the Doob inequality, we get the result in Lemma~\ref{thm:ch4_transient_bound}.
%%%%%%%%%%%%%%%%%%%%%%%%%%%%%%%%%%%%%%%%%%%%%%%%%%%%%%%%%%%%%%%%%%%%%%%%%%%%%%%%%%%%%%%%%%%%%%%%%%%%%%%%%%%%%%%%%%%%%%%%%%%%%%%%%%%%%%%%%%%%%%%%%%%%%%%%%%%%%%%%%%%%%%%%%%%%%%%%%%%%%%%%%%%%%%%%%%%%%%%%%%%%%%%%%%%%%%%
\subsection{Proof of Theorem~\ref{thm:ch4_transient}}
\label{app:ch4_transient}
We recall that since the space $\mb{V}$ is a Polish space, the space of c\`adl\`ag functions under the Skhorohod topology is a Polish space\cite[Theorem~5.6,\,p.121]{Ethier_Kurtz_book}. Hence from the Prohorov's theorem \cite{Ethier_Kurtz_book}, tightness is equivalent to relative compactness. Therefore, it is enough to show the tightness and then we need to show that every limiting point has the same law as the unique OU process that solves \eqref{eq:chfour_OU} with the initial point $\mf{Z}(0)$.

We use Theorem~4.1 of \cite[page 354]{Ethier_Kurtz_book} 
to show the tightness of $(\mf{Z}^{(N)}(t),t\geq 0)$. First, we establish several useful preliminary results.

Since $\mf{Z}^{(N)}(0)\Rightarrow \mf{Z}(0)$, it implies that the sequence $\mf{Z}^{(N)}(0)$ is tight. Let $\tilde{B}(r)$ be the closed ball with radius $r$ centered at $\mf{0}$. For every $\epsilon>0$, there exists $r_{\epsilon}<\infty$ such that $\mb{P}(\mf{Z}^{(N)}(0)\in \tilde{B}(r_{\epsilon}))>1-\epsilon$ for all $N\geq 1$. We now define a random variable $\mf{X}^{(N,\epsilon)}(0)$ such that it coincides with $\mf{X}^{(N)}(0)$ on $\{\mf{Z}^{(N)}(0)\in \tilde{B}(r_{\epsilon})\}$ and $\mf{Z}^{(N,\epsilon)}(0)$ is uniformly bounded in $N$ on $\{\mf{Z}^{(N)}(0)\notin \tilde{B}(r_{\epsilon})\}$. Then by using coupling arguments, the processes $(\mf{Z}^{(N,\epsilon)}(t),t\geq 0)$ and $(\mf{Z}^{(N)}(t),t\geq 0)$ coincide on $\{\mf{Z}^{(N)}(0)\in \tilde{B}(r_{\epsilon})\}$. Hence, without loss of generality, we assume that $\mf{Z}^{(N)}(0)$ is uniformly bounded in $N$. As a consequence, the result stated in Lemma~\ref{thm:ch4_transient_bound} can be used in the rest of the proof.

Next, we state the following useful result from \cite[Lemma~3.3]{Graham_clt}. For $a$ and $h$ in $\mb{R}$, let 
\nbeq
\hat{B}(a,h)=(a+h)^d-a^d-da^{d-1}h.
\neeq 
Then if both $a$ and $a+h$ lie in $[0,1]$, we have
\beq
\label{eq:chfour_B}
0\leq \hat{B}(a,h)\leq h^d+(2^d-d-2)ah^2.
\eeq 
We now define a mapping $\hat{G}:\mb{U}\times\mb{V}\mapsto\mb{V}$ as follows: for $\mf{r}\in\mb{U}$ and $\mf{y}\in\mb{V}$,
\beq
\label{eq:chfour_G}
(\hat{G}(\mf{r},\mf{y}))_n=\sigma \hat{B}(r_{n-1},y_{n-1})-\sigma \hat{B}(r_{n},y_{n}).
\eeq
Then if $\mf{r}+\mf{y}\in\mb{U}$, we have
\beq
\label{eq:chfour_WHG}
W(\mf{r}+\mf{y})-W(\mf{r})=H(\mf{r})\mf{y}+\hat{G}(\mf{r},\mf{y}).
\eeq

Note that since $\mf{Z}^{(N)}(t)=\sqrt{N}(\mf{X}^{(N)}(t)-\mf{x}(t,\mf{u}))$, we have
\beq
\mf{X}^{(N)}(t)=\mf{x}(t,\mf{u})+\frac{\mf{Z}^{(N)}(t)}{\sqrt{N}}.
\eeq
Here, $\mf{X}^{(N)}(t)$, $\mf{x}(t,\mf{u})\in\mb{U}$ and $\frac{\mf{Z}^{(N)}(t)}{\sqrt{N}}\in\mb{V}$. Hence, we can write the following equation from \eqref{eq:chfour_WHG},
\beq
\label{eq:chfour_WHGN}
W(\mf{X}^{(N)}(t)-W(\mf{x}(t,\mf{u}))
=H(\mf{x}(t,\mf{u}))\frac{\mf{Z}^{(N)}(t)}{\sqrt{N}}
+\hat{G}\left(\mf{x}(t,\mf{u}),\frac{\mf{Z}^{(N)}(t)}{\sqrt{N}}\right).
\eeq

Now we show that the conditions of \cite[Theorem~4.1, page~354]{Ethier_Kurtz_book} are satisfied.
Let us write $\mf{B}_n(t)$ and $\mf{A}_n^{i,j}(t)$ of \cite[Theorem~4.1, page~354]{Ethier_Kurtz_book} as $\mf{D}_{(N)}(t)$ and $\mf{A}_{(N)}^{i,j}(t)$, respectively. Then from \eqref{eq:chfour_SDEN2} we write
\beq
\mf{D}_{(N)}(t)=\int_{s=0}^t\sqrt{N}(W(\mf{X}^{(N)}(s))-W(\mf{x}(s,\mf{u})))\,ds
-\int_{s=0}^tW_3(\mf{X}^{(N)}(s))\,ds.
\eeq
From \eqref{eq:chfour_WHGN}, we can also write
\begin{multline}
\label{eq:chfour_DN}
\mf{D}_{(N)}(t)\\
\quad=\int_{s=0}^tH(\mf{x}(s,\mf{u}))\mf{Z}^{(N)}(s)\,ds+\int_{s=0}^t\sqrt{N}\hat{G}\left(\mf{x}(s,\mf{u}),\frac{\mf{Z}^{(N)}(s)}{\sqrt{N}}\right)\,ds
-\int_{s=0}^tW_3(\mf{X}^{(N)}(s))\,ds.
\end{multline}

Also, from \eqref{eq:chfour_covN} we have
\beq
\mf{A}_{(N)}^{i,i}(t)=\int_{s=0}^t\left((W_1(\mf{X}^{(N)}(s)))_i+(W_2(\mf{X}^{(N)}(s)))_i-\frac{1}{\sqrt{N}}(W_3(\mf{X}^{(N)}(s)))_i\right)\,ds,
\eeq
and $\mf{A}_{(N)}^{i,j}(t)=0$ for $i\neq j$. Let us define
\beq
\mf{D}(t)=\int_{s=0}^tH(\mf{x}(s,\mf{u}))\mf{Z}(s)\,ds
-\int_{s=0}^tW_3(\mf{x}(s,\mf{u}))\,ds
\eeq
and 
\beq
A^{i,i}(t)=\int_{s=0}^t\left((W_1(\mf{x}(s,\mf{u})))_i+(W_2(\mf{x}(s,\mf{u})))_i\right)\,ds,
\eeq
with $A^{i,j}(t)=0$ for $i\neq j$.

Since $(\mf{Z}_i^{(N)}(t),t\geq 0)$ has jumps of size $\frac{1}{\sqrt{N}}$ and from the continuity of  $\mf{D}_{(N)}(t)$ and $\mf{A}_{(N)}^{i,j}(t)$ in $t$, the conditions $(4.3)$-$(4.5)$ of \cite[Theorem~4.1, p.~354]{Ethier_Kurtz_book} are valid. From the condition $(4.6)$ of \cite[Theorem~4.1, p.~354]{Ethier_Kurtz_book}, we need to show that for $T>0$, $\mf{V}_{(N)}(T)\to 0$ in probability as $N\to\infty$, where $\mf{V}_{(N)}(t)=\mf{D}_{(N)}(t)-\mf{D}(t)$ is given by
\begin{multline}
\mf{V}_{(N)}(t)
\\\quad=\left(\int_{s=0}^tH(\mf{x}(t,\mf{u}))\mf{Z}^{(N)}(t)\,ds+\int_{s=0}^t\sqrt{N}\hat{G}\left(\mf{x}(t,\mf{u}),\frac{\mf{Z}^{(N)}(t)}{\sqrt{N}}\right)\,ds
-\int_{s=0}^tW_3(\mf{X}^{(N)}(s))\,ds\right)\\
-\left(\int_{s=0}^tH(\mf{x}(s,\mf{u}))\mf{Z}^{(N)}(s)\,ds
-\int_{s=0}^tW_3(\mf{x}(s,\mf{u}))\,ds\right).
\end{multline}

From \eqref{eq:ch4_transient_bound} and \eqref{eq:chfour_B}, we have $\int_{s=0}^T\sqrt{N}\hat{G}\left(\mf{x}(s,\mf{u}),\frac{\mf{Z}^{(N)}(s)}{\sqrt{N}}\right)\,ds\to 0$ in probability. From the existence of the mean-field limit, it follows that $\int_{s=0}^TW_3(\mf{X}^{(N)}(s))\,ds-\int_{s=0}^TW_3(\mf{x}(s,\mf{u}))\,ds\to 0$ in probability.
This concludes that the condition $(4.6)$ of \cite[Theorem~4.1, p.~354]{Ethier_Kurtz_book} is also true. The condition $(4.7)$ of \cite[Theorem~4.1, p.~354]{Ethier_Kurtz_book} is also true since $\mf{A}_{(N)}^{i,i}(T)-A^{i,i}(T)\to 0$ in probability from the existence of the mean-field limit.
Finally, from the fact that the SDE \eqref{eq:chfour_OU} has a unique solution, the sequence $
\{(\mf{Z}^{(N)}(t),t\geq 0)\}_{N\geq 1}$ converges to the unique solution of the SDE \eqref{eq:chfour_OU} as $N\to\infty$. 
	
	%	 holds. To complete this proof,  we need to show that the conditions $(4.3)$-$(4.7)$ of the Theorem~4.1 on page 354 in \cite{Ethier_Kurtz_book} are true. From \eqref{eq:chfour_SDEN}, due to the fact that $(z^{(N)}_i(t),t\geq 0)$ has jumps of size $\frac{1}{\sqrt{N}}$, the conditions $(4.3)$ and $(4.4)$ are trivially true.
	%%%%%%%%%%%%%%%%%%%%%%%%%%%%%%%%%%%%%%%%%%%%%%%%%%%%%%%%%%%%%%%%%%%%%%%%%%%%%%%%%%%%%%%%%%%%%%%%%%%%%%%%%%%%%%%%%%%%%%%%%%%%%%%%%%%%%%%%%%%%%%%%%%%%%%%%%%%%%%%%%%%%%%%%%%%%%%%%%%%%%%%%%%%%%%%%%%%%%%%%%%%%%%%%%%%%%%%%%%%%%%%%%%%%%%
	\subsection{Proof of Lemma~\ref{thm:ch4_stn_bound}}
	\label{app:ch4_stn_bound}
	The proof is based on Lemma~\ref{thm:ch4_exponential}. Although our main objective is to establish the convergence of $\{\mf{Z}^{(N)}(t)\}_{N\geq 1}$ as $N\to\infty$ in the stationary regime, we prove the tightness of the stationary sequence $\{\mf{Z}^{(N)}(\infty)\}_{N\geq 1}$ by studying an alternative process $(\mf{Q}^{(N)}(t),t\geq 0)$ with the help of Lemma~\ref{thm:ch4_exponential}, where
	\nbeq
	\label{eq:chfour_qn_defn}
	\mf{Q}^{(N)}(t)=\sqrt{N}(\mf{X}^{(N)}(t)-\bm{\pi}).
	\neeq
	Let us write the solution to the mean-field equation \eqref{eq:chfour_mfe3} at time $h$ with the initial point $\mf{v}$ as $\mf{y}(h,\mf{v})$. We have
	\beq
	\label{eq:chfour_mf_random}
	\mf{y}(h,\mf{v})=\mf{v}+\int_{s=0}^hW(\mf{y}(s,\mf{v}))\,ds.
	\eeq
	Also, the process $(\mf{X}^{(N)}(t),t\geq 0)$ satisfies
	\beq
	\label{eq:chfour_evolution}
	\mf{X}^{(N)}(t)=\mf{X}^{(N)}(0)+\int_{s=0}^tW(\mf{X}^{(N)}(s))\,ds-\int_{s=0}^t\frac{1}{\sqrt{N}}W_3(\mf{X}^{(N)}(s))\,ds+\frac{\mf{M}^{(N)}(t)}{\sqrt{N}}.
	\eeq

	For $t_0\geq 0$, we obtain
	\nbeq
	\mf{Q}^{(N)}(t_0+h)=\sqrt{N}(\mf{X}^{(N)}(t_0+h)-\mf{y}(h,\mf{X}^{(N)}(t_0)))\
	+\sqrt{N}(\mf{y}(h,\mf{X}^{(N)}(t_0))-\bm{\pi}).
	\neeq
	By defining $\mf{Z}^{(N)}(t_0, h)=\sqrt{N}(\mf{X}^{(N)}(t_0+h)-\mf{y}(h,\mf{X}^{(N)}(t_0)))$, we obtain
	\beq
	\label{eq:chfour_qnzn}
	\mf{Q}^{(N)}(t_0+h)=\mf{Z}^{(N)}(t_0, h)
	+\sqrt{N}(\mf{y}(h,\mf{X}^{(N)}(t_0))-\bm{\pi}).
	\eeq
	Also, from \eqref{eq:chfour_mf_random},
	\beq
	\label{eq:chfour_zn}
	\mf{Z}^{(N)}(t_0,h)=\sqrt{N}(\mf{X}^{(N)}(t_0+h)-\mf{X}^{(N)}(t_0))-\sqrt{N}\int_{s=0}^t W(\mf{y}(s,\mf{X}^{(N)}(t_0)))\,ds.
	\eeq
	From \eqref{eq:chfour_evolution}, we have
	\begin{multline}
	\label{eq:chfour_xn_difference}
	\mf{X}^{(N)}(t_0+h)-\mf{X}^{(N)}(t_0)=\int_{s=t_0}^{t_0+h}W(\mf{X}^{(N)}(s))\,ds+\frac{\mf{M}^{(N)}(t_0+h)-\mf{M}^{(N)}(t_0)}{\sqrt{N}}\\
	-\frac{1}{\sqrt{N}}\int_{s=t_0}^{t_0+h}W_3(\mf{X}^{(N)}(s))\,ds.
	\end{multline}
	Then by using \eqref{eq:chfour_zn} and \eqref{eq:chfour_xn_difference}, we get
	\begin{multline}
	\label{eq:chfour_zn_total}
	\mf{Z}^{(N)}(t_0,h)=\sqrt{N}\int_{s=0}^{h}W(\mf{X}^{(N)}(t_0+s))\,ds-\sqrt{N}\int_{s=0}^{h}W(\mf{y}(s,\mf{x}^{(N)}(t_0)))\,ds\\ +(\mf{M}^{(N)}(t_0+h)-\mf{M}^{(N)}(t_0))
	-\int_{s=t_0}^{t_0+h}W_3(\mf{X}^{(N)}(s))\,ds.\nonumber
	\end{multline}
	After simplifications, we can write
	\begin{multline}
	\norm{\mf{Z}^{(N)}(t_0,h)}_2\leq\sqrt{N}\int_{s=0}^h\norm{W(\mf{X}^{(N)}(t_0+s))-W(\mf{y}(s,\mf{X}^{(N)}(t_0)))}_2\,ds\\
	+\norm{\mf{M}^{(N)}(t_0+h)-\mf{M}^{(N)}(t_0))}_2+\abs{\beta} d\sqrt{C}h.\nonumber
	\end{multline}
	Hence, we obtain
	\nbeq
	\norm{\mf{Z}^{(N)}(t_0,h)}_2\leq B_W\int_{s=0}^h\norm{\mf{Z}^{(N)}(t_0,s)}_2\,ds+\abs{\beta}d\sqrt{C}h+\norm{\mf{M}^{(N)}(t_0+h)-\mf{M}^{(N)}(t_0))}_2.
	\neeq
	For any $T\geq 0$, the Gronwall's inequality implies that there exists a constant $S_T$ such that
	\beq
	\label{eq:chfour_zn_final}
	\sup_{0\leq h\leq T}\norm{\mf{Z}^{(N)}(t_0,h)}_2\leq S_T(S_T+\sup_{0\leq h\leq T}\norm{\mf{M}^{(N)}(t_0+h)-\mf{M}^{(N)}(t_0))}_2).
	\eeq
	By using Lemma~\ref{thm:ch4_exponential},  we can write from \eqref{eq:chfour_qnzn} as below 
	\beq
	\label{eq:chfour_rel1}
	\norm{\mf{Q}^{(N)}(t_0+h)}_2\leq\norm{\mf{Z}^{(N)}(t_0,h)}_2+e^{-\delta_1 h}D_3\norm{\mf{Q}^{(N)}(t_0)}_2.
	\eeq
	From \eqref{eq:chfour_zn_final} and \eqref{eq:chfour_rel1}, we obtain
	\nbeq
	\norm{\mf{Q}^{(N)}(t_0+h)}_2\leq S_T(S_T+\sup_{0\leq h\leq T}\norm{\mf{M}^{(N)}(t_0+h)-\mf{M}^{(N)}(t_0))}_2)+e^{-\delta_1 h}D_3\norm{\mf{Q}^{(N)}(t_0)}_2.
	\neeq
	
	As a result, we can find some constant $L_T$ as a function of $T$ such that for $0\leq h\leq T$, we have
	\beq
	\label{eq:chfour_rel4}
	\expect{\norm{\mf{Q}^{(N)}(t_0+h)}_2^2}
	\leq L_T
	+3e^{-2\delta_1h}D_3^2\expect{\norm{\mf{Q}^{(N)}(t_0)}^2_2}.
	\eeq

	We now select a large value of $T$ such that $3e^{-2\delta_1 T}D_3^2\leq \epsilon<1$. Then for all $N\geq 1$ and an integer $m$, we get
	\nbeq
	\expect{\norm{\mf{Q}^{(N)}((m+1)T)}_2^2}\leq L_T+\epsilon \expect{\norm{\mf{Q}^{(N)}(mT)}^2_2}.
	\neeq
	By using the induction method, we obtain
	\begin{align}
	\label{eq:chfour_rel5}
	\expect{\norm{\mf{Q}^{(N)}(mT)}_2^2}&\leq L_T(\sum_{j=1}^m\epsilon^{j-1})+\epsilon^m \expect{\norm{\mf{Q}^{(N)}(0)}^2_2}\nonumber\\
	&\leq\frac{L_T}{1-\epsilon}+\expect{\norm{\mf{Q}^{(N)}(0)}^2_2}.
	\end{align}
	However, from \eqref{eq:chfour_rel4},
	\nbeq
	\sup_{0\leq h\leq T}\expect{\norm{\mf{Q}^{(N)}(mT+h)}_2^2}
	\leq L_T+3D_3^2 \expect{\norm{\mf{Q}^{(N)}(mT)}^2_2}.
	\neeq
	As a consequence, \eqref{eq:chfour_rel5} implies
	\nbeq
	\sup_{0\leq h\leq T}\expect{\norm{\mf{Q}^{(N)}(mT+h)}_2^2}
	\leq L_T
	+3D_3^2 \left(\frac{L_T}{1-\epsilon}+\expect{\norm{\mf{Q}^{(N)}(0)}^2_2}\right).
	\neeq
	Since $m$ is arbitrary, we conclude
	\nbeq
	\sup_{t\geq 0}\expect{\norm{\mf{Q}^{(N)}(t)}_2^2}
	\leq L_T
	+3D_3^2 \left(\frac{L_T}{1-\epsilon}+\expect{\norm{\mf{Q}^{(N)}(0)}^2_2}\right).
	\neeq
	
	From ergodicity and the Fatou's Lemma~\cite[p.~492]{Ethier_Kurtz_book}, the stationary random variable $\mf{Z}^{(N)}(\infty)$ satisfies
	\begin{align}
	\expect{\norm{\mf{Z}^{(N)}(\infty)}_2^2}&\leq \liminf_{t\geq 0}\expect{\norm{\mf{Q}^{(N)}(t)}_2^2}\nonumber\\
	&\leq \sup_{t\geq 0}\expect{\norm{\mf{Q}^{(N)}(t)}_2^2}.\nonumber
	\end{align}
	Finally, to show that $\limsup_{N\to\infty}\expect{\norm{\mf{Z}^{(N)}(\infty)}_2^2}<\infty$, we need to find an $\mf{X}^{(N)}(0)$ such that $\limsup_{N\to\infty}\expect{\norm{\mf{Q}^{(N)}(0)}^2_2}<\infty$. For $n\geq 1$, if we select $\mf{X}^{(N)}_n(0)=\frac{j}{N}$ so that $\frac{-1}{2N}\leq \pi_n-\frac{j}{N}\leq \frac{1}{2N}$, then $\expect{\norm{\mf{Q}^{(N)}(0)}^2_2}<\frac{C}{4N}$.
	Hence, $\limsup_{N\to\infty}\expect{\norm{\mf{Q}^{(N)}(0)}^2_2}=0$. This completes the proof.
	
	%%%%%%%%%%%%%%%%%%%%%%%%%%%%%%%%%%%%%%%%%%%%%%%%%%%%%%%%%%%%%%%%%%%%%%%%%%%%%%%%%%%%%%%%%%%%
	%%%%%%%%%%%%%%%%%%%%%%%%%%%%%%%%%%%%%%%%%%%%%%%%%%%%%%%%%%%%%%%%%%%%%%%%%%%%%%%%%%%%%%%%%
	%%%%%%%%%%%%%%%%%%%%%%%%%%%%%%%%%%%%%%%%%%%%%%%%%%%%%%%%%%%%%%%%%%%%%%%%%%%%%%%%%%%%%%%%%%%%%%%%
	\subsection{Proof of Theorem~\ref{thm:ch4_clt_stn}}
	\label{app:ch4_clt_stn}
	From Lemma~\ref{thm:ch4_stn_bound} and the Markov inequality, the sequence $\{\mf{Z}^{(N)}(0)\}_{N\geq 1}$ is tight. As a result, from the Prohorov theorem~\cite[Page~104]{Ethier_Kurtz_book}, the sequence $\{\mf{Z}^{(N)}(0)\}_{N\geq 1}$ is relatively compact. Consider a converging subsequence and let $\mf{Z}^{(\infty)}(0)$ be its limiting point, which is square integrable. Then from Theorem~\ref{thm:ch4_transient}, the considered converging subsequence converges in law to the unique OU process $(\mf{Z}^{(\infty)}(t),t\geq 0)$ that solves the SDE \eqref{eq:chfour_OU_stn} with initial point $\mf{Z}^{(\infty)}(0)$. But, we know from \cite[Lemma~7.7 and Theorem~7.8, page 131]{Ethier_Kurtz_book} that the limit of a sequence of stationary processes is stationary. Hence the law of $(\mf{Z}^{(\infty)}(t),t\geq 0)$ should be the unique law of the stationary OU process solving the SDE \eqref{eq:chfour_OU_stn}. This argument applies for every converging subsequence. Hence, the sequence $\{(\mf{Z}^{(N)}(t),t\geq 0)\}_{N\geq 1}$ in the stationary regime converges to the unique stationary OU process solving the SDE \eqref{eq:chfour_OU_stn}. This completes the proof.
	%%%%%%%%%%%%%%%%%%%%%%%%%%%%%%%%%%%%%%%%%%%%%%%%%%%%%%%%%%%%%%%%%%%%%%%%%%%%%%%%%%%%%%%%%%%%%%%%%%%%%%%%%%%%%%%%%%%%%%%%%%%%%%%%%%%%%%%%%%%%%%%%%%%%%%%%%%%%%%%%%%%%%%%%
	%%%%%%%%%%%%%%%%%%%%%%%%%%%%%%%%%%%%%%%%%%%%%%%%%%%%%%%%%%%%%%%%%%%%%%%%%%%%%%
	\subsection{Proof of Theorem~\ref{thm:ch4_blocking}}
	\label{app:ch4_blocking}
	The proof is based on Little's law~\cite[Theorem~4.1]{asmussen}, Theorem~\ref{thm:ch4_invariant_stn}, and Theorem~\ref{thm:ch4_clt_stn}. Let us consider a random variable $\mf{\tilde{S}}^{(N)}(\infty)$ which denotes the number of progressing jobs in the system in stationary.
	From Little's law, we obtain
	\nbeq
	(N\lambda^{(N)})(1-P_{block}^{(N)})=\expect{\mf{\tilde{S}}^{(N)}(\infty)}.
	\neeq
	We now consider a random variable $\mf{X}^{(N)}(\infty)=(\mf{X}_n^{(N)}(\infty),0\leq n\leq C)$ where $\mf{X}_{n}^{(N)}(\infty)$ is the fraction of servers with at least $n$ progressing jobs in the stationary regime. 
	Then we can write
	\nbeq
	\mf{\tilde{S}}^{(N)}(\infty)=\sum_{n=0}^C Nn( \mf{X}^{(N)}_n(\infty)-\mf{X}^{(N)}_{n+1}(\infty)).
	\neeq
	Hence, we have
	\nbeq
	(N\lambda^{(N)})(1-P_{block}^{(N)})=\expect{\sum_{n=0}^C Nn(\mf{X}^{(N)}_n(\infty)-\mf{X}^{(N)}_{n+1}(\infty))}.
	\neeq
	Therefore, we obtain
	\beq
	\label{eq:chfour_gap1}
	\lambda^{(N)}(1-P_{block}^{(N)})=\sum_{n=0}^C n \expect{\mf{X}^{(N)}_n(\infty)-\mf{X}^{(N)}_{n+1}(\infty)}.
	\eeq
	Also, from Theorem~\ref{thm:ch4_invariant_stn}, Lemma~\ref{thm:ch4_stn_bound}, and Theorem~\ref{thm:ch4_clt_stn}, since the diffusion limit in the stationary regime has the mean vector $\kappa=\int_{0}^{\infty}e^{H(\bm{\pi})s}W_3(\bm{\pi})\,ds$ and $\limsup_{N\to\infty}\expect{\norm{\mf{Z}^{(N)}(\infty)}^2}<\infty$, we have
	\nbeq
	\lim_{N\to\infty}\sqrt{N}\left(\expect{\mf{X}^{(N)}(\infty)}-\bm{\pi}\right)=\kappa.
	\neeq
	Therefore
	\beq
	\label{eq:chfour_gap2}
	\expect{\mf{X}^{(N)}(\infty)}-\bm{\pi}=\frac{\kappa}{\sqrt{N}}+
	o(N^{-\frac{1}{2}}).
	\eeq
	
	Then from \eqref{eq:chfour_gap1} and \eqref{eq:chfour_gap2}, we get
	\beq
	\label{eq:chfour_total_eq}
	\lambda^{(N)}(1-P_{block}^{(N)})=\sum_{n=0}^C n(\pi_n-\pi_{n+1})+\frac{1}{\sqrt{N}}\sum_{n=0}^C n(\kappa_n-\kappa_{n+1})+o(N^{-\frac{1}{2}}).
	\eeq
	But, from the stationary mean-field equations, the fixed-point satisfies
	\beq
	\label{eq:chfour_fp}
	\pi_0=1 \text{ and }\sum_{n=0}^C n(\pi_n-\pi_{n+1})=\sigma(1-\pi_C^d).
	\eeq
	Then from \eqref{eq:chfour_total_eq} and \eqref{eq:chfour_fp}, we can write
	\nbeq
	1-P_{block}^{(N)}=\frac{1}{\lambda^{(N)}}\left[\sigma(1-\pi_C^d)+\frac{1}{\sqrt{N}}\sum_{n=0}^C n(\kappa_n-\kappa_{n+1})\right]+o(N^{-\frac{1}{2}}).
	\neeq
	However, by using $\lambda^{(N)}=\sigma-\frac{\beta}{\sqrt{N}}$ and the fact that $\frac{\beta}{\sigma\sqrt{N}}<1$, we obtain
	\beq
	1-P_{block}^{(N)}=\frac{1}{\sigma}\left(1+\frac{\beta}{\sigma\sqrt{N}}+o(N^{-\frac{1}{2}})\right)\left[\sigma(1-\pi_C^d)+\frac{1}{\sqrt{N}}\sum_{n=0}^C n(\kappa_n-\kappa_{n+1})\right]
	+o(N^{-\frac{1}{2}}).\nonumber
	\eeq
	After simple calculations, we obtain
	\nbeq
	P_{block}^{(N)}=\pi_C^d-\frac{1}{\sigma\sqrt{N}}\left(\sum_{n=0}^C n(\kappa_n-\kappa_{n+1})\right)-\frac{\beta}{\sigma\sqrt{N}}(1-\pi_C^d)+o(N^{-\frac{1}{2}}).
	\neeq

	This completes the proof.
	%%%%%%%%%%%%%%%%%%%%%%%%%%%%%%%%%%%%%%%%%%%%%%%%%%%%%%%%%%%%%%%%%%%%%%%%%
	%%%%%%%%%%%%%%%%%%%%%%%%%%%%%%%%%%%%%%%%%%%%%%%%%%%%%%%%%%%%%%%%%%%%%%%%%%%%%
	%%%%%%%%%%%%%%%%%%%%%%%%%%%%%%%%%%%%%%%%%%%%%%%%%%%%%%%%%%%%%%%
	\subsection{Proof of Lemma~\ref{thm:ch4_exponential}}
	\label{app:ch4_exponential}
	The proof uses the quasi-monotonicity of the mean-field. Let us write the unique solution to the MFEs with the initial point $\mf{v}$ as $(\mf{y}(t,\mf{v}),t\geq 0)$. From the quasi-monotonicity of the mean-field, we have
	\beq
	\label{eq:chfour_monotone_eqns}
	\mf{y}(t,\min(\mf{v},\bm{\pi}))\leq \mf{y}(t,\mf{v})\leq \mf{y}(t,\max(\mf{v},\bm{\pi})).
	\eeq
	For $\mf{a},\mf{b}\in\mb{U}$, let
	\nbeq
	\norm{\mf{a}-\mf{b}}_1=\sum_{i=0}^C\abs{a_i-b_i}.
	\neeq
	From Lemma~4 of \cite{xie}, since  $\min(\mf{v},\bm{\pi})\leq \bm{\pi}$ and $\max(\mf{v},\bm{\pi})\geq \bm{\pi}$, we have
	\begin{eqnarray}
	\displaystyle \norm{\mf{y}(t,\min(\mf{v},\bm{\pi}))-\bm{\pi}}_1\leq e^{-t}\norm{\min(\mf{v},\bm{\pi})-\bm{\pi}}_1,\nonumber\\
	\displaystyle \norm{\mf{y}(t,\max(\mf{v},\bm{\pi}))-\bm{\pi}}_1\leq e^{-t}\norm{\max(\mf{v},\bm{\pi})-\bm{\pi}}_1.\label{eq:chfour_gas_bounds}
	\end{eqnarray}
	For $t\geq 0$, let us define two sets $V_+(t)$ and $V_-(t)$ as 
	\begin{eqnarray}
	V_+(t)=\{i:y_i(t,\mf{v})\geq \pi_i\},\nonumber\\
	V_-(t)=\{i:y_i(t,\mf{v})< \pi_i\}.\nonumber
	\end{eqnarray}
	Then we can write
	\nbeq
	\norm{\mf{y}(t,\mf{v})-\bm{\pi}}_1=\sum_{i\in V_+(t)}(y_i(t,\mf{v})-\pi_i)+\sum_{j\in V_-(t)}(\pi_j-y_j(t,\mf{v})).
	\neeq
	From \eqref{eq:chfour_monotone_eqns} and \eqref{eq:chfour_gas_bounds}, we write
	\begin{align}
	\norm{\mf{y}(t,\mf{v})-\bm{\pi}}_1&\leq \sum_{i\in V_+(t)}(y_i(t,\max(\mf{v},\bm{\pi}))-\pi_i)+\sum_{j\in V_-(t)}(\pi_j-y_j(t,\min(\mf{v},\bm{\pi}))),\nonumber\\
	&\leq\norm{\mf{y}(t,\max(\mf{v},\bm{\pi}))-\bm{\pi}}_1+\norm{\mf{y}(t,\min(\mf{v},\bm{\pi}))-\bm{\pi}}_1,\nonumber\\
	&\leq e^{-t}\norm{\max(\mf{v},\bm{\pi})-\bm{\pi}}_1+e^{-t}\norm{\min(\mf{v},\bm{\pi})-\bm{\pi}}_1,\nonumber\\
	&= e^{-t}\norm{\mf{v}-\bm{\pi}}_1.\nonumber
	\end{align}
	Finally, the result follows from the fact that the norms $\norm{\cdot}_1$ and $\norm{\cdot}_2$ are equivalent.
	
	%%%%%%%%%%%%%%%%%%%%%%%%%%%%%%%%%%%%%%%%%%%%%%%%%%%%%%%%%%%%%%%%%%%%%%%%%%%%%%%%%%%%%%%%%%%%%%%%%%%%%%%%%%%%%%%%%%%%%%%%%%%%%%%%%%%%%%%%%%%%%%%%%%%%%%%%%%%%%%%%%%%%%%%%%%
	
	\section{Conclusions}
	\label{sec:conclusions}
	In this paper, we established an FCLT satisfied by the fluctuation process around the mean-field of the occupancy distribution both in transient and stationary regimes for loss models with exponential service time distributions under the assumption that $\lambda^{(N)}=\sigma-\frac{\beta}{\sqrt{N}}$. The proof used the global stability of the mean-field. We then showed that the limiting diffusion process is an OU process that is controlled by the mean-field limit. We then showed how  Little's law and the FCLT can be combined to show that $P_{block}^{(N)}-\pi_C^d$ is $O(N^{-\frac{1}{2}})$ if $\beta\neq 0$ with the constants given in terms of the mean field and the parameters.
	These techniques may be used to study other routing strategies such as a randomized join below threshold strategy, or more generally other occupancy based routing strategies for which the global asymptotic stability of the mean-field can be established (see \cite{thiru_itc30} for such models).

%	under the Halfin-Whitt regime, we have studied a FCLT which characterizes the fluctuations of the stochastic empirical process for both the transient and stationary regimes. We then used this result to obtain an approximation to the blocking of the system when the number of servers $N$ is large.
	
	\section*{Acknowledgement}
	This research was supported in part by a Discovery Grant from the Natural Sciences and Engineering Research Council of Canada (NSERC). 
	Cette recherche a \'{e}t\'{e} subventionne\'{e} par le Conseil de recherches en sciences naturelles et g\'{e}nie du Canada (CRSNG).

\bibliographystyle{apt}
\bibliography{reference_loss_heavy_traffic}

\begin{thebibliography}{10}

\bibitem{AmazonEC2}
Amazon {EC2}.
\newblock \url{http://aws.amazon.com/ec2/}.

\bibitem{Azure}
Microsoft {A}zure.
\newblock \url{http://www.microsoft.com/windowsazure/}.

\bibitem{asmussen}
{\sc Asmussen, S.} (2003).
\newblock {\em Applied Probability and Queues} vol.~51 of {\em Stochastic
  Modelling and Applied Probability}.
\newblock Springer, New York.

\bibitem{gamarnik}
{\sc Eschenfeldt, P. and Gamarnik, D.} (2018).
\newblock Join the shortest queue with many servers. {T}he heavy-traffic
  asymptotics.
\newblock {\em Math. Oper. Res.\/} {\bf 43,} 867--886.

\bibitem{Ethier_Kurtz_book}
{\sc Ethier, S.~N. and Kurtz, T.~G.} (1985).
\newblock {\em Markov Processes: Characterization and Convergence}.
\newblock John Wiley and Sons Ltd.

\bibitem{gast_fns}
{\sc Gast, N.} (2017).
\newblock Expected values estimated via mean-field approximation are
  1/n-accurate.
\newblock {\em Proceedings of the ACM on Measurement and Analysis of Computing
  Systems\/} {\bf 1,} 1--26.

\bibitem{gast}
{\sc Gast, N. and Van~Houdt, B.} (2017).
\newblock A refined mean field approximation.
\newblock {\em Proc. ACM Meas. Anal. Comput. Syst.\/} {\bf 1,} 33:1--33:28.

\bibitem{gazd}
{\sc Gazdzicki, P., Lambadaris, I. and Mazumdar, R.} (1993).
\newblock Blocking probabilities for large multi-rate{ E}rlang loss systems.
\newblock {\em Adv.Appl.Prob.\/} {\bf 25,} 997--1009.

\bibitem{Graham_chaos}
{\sc Graham, C.} (2000).
\newblock Chaoticity on path space for a queueing network with selection of the
  shortest queue among several.
\newblock {\em J. Appl. Probab.\/} {\bf 37,} 198--211.

\bibitem{Graham_clt}
{\sc Graham, C.} (2005).
\newblock Functional central limit theorems for a large network in which
  customers join the shortest of several queues.
\newblock {\em Probability Theory and Related Fields\/} {\bf 131,} 97--120.

\bibitem{Karthik}
{\sc Karthik, A., Mukhopadhyay, A. and Mazumdar, R.~R.} (2017).
\newblock Choosing among heterogeneous server clouds.
\newblock {\em Queueing Syst.\/} {\bf 85,} 1--29.

\bibitem{Ledermann}
{\sc Ledermann, W., Reuter, G. E.~H. and Mahler, K.} (1954).
\newblock Spectral theory for the differential equations of simple birth and
  death processes.
\newblock {\em Philosophical Transactions of the Royal Society of London.
  Series A, Mathematical and Physical Sciences\/} {\bf 246,} 321--369.

\bibitem{mit}
{\sc Mitzenmacher, M.} (1996).
\newblock The power of two choices in randomized load balancing.
\newblock {\em PhD Thesis, Berkeley\/}.

\bibitem{Mukherjee_blocking}
{\sc Mukherjee, D., Borst, S.~C., van Leeuwaarden, J. and Whiting, P.~A.}
  (2016).
\newblock Asymptotic optimality of power-of-\$d\$ load balancing in large-scale
  systems.
\newblock In {\em arXiv:1612.00723}.

\bibitem{arpan}
{\sc Mukhopadhyay, A., Mazumdar, R.~R. and Guillemin, F.} (2015).
\newblock The power of randomized routing in heterogeneous loss systems.
\newblock In {\em Teletraffic Congress (ITC 27), 2015 27th International}.
\newblock pp.~125--133.

\bibitem{arpan2}
{\sc Mukhopadhyayay, A., Karthik, A., Mazumdar, R.~R. and Guillemin, F.~M.}
  (2015).
\newblock Mean field and propagation of chaos in multi-class heterogeneous loss
  models.
\newblock {\em Performance Evaluation\/} {\bf 91,} 117--131.

\bibitem{pang}
{\sc Pang, G., Talreja, R. and Whitt, W.} (2007).
\newblock Martingale proofs of many-server heavy-traffic limits for markovian
  queues.
\newblock {\em Probab. Surveys\/} {\bf 4,} 193--267.

\bibitem{thiru_itc30}
{\sc Vasantam, T. and Mazumdar, R.~R.}
\newblock On occupancy based randomized routing schemes in large systems of
  shared servers.
\newblock In {\em Proceedings 30th International Teletraffic Congress (ITC)}.

\bibitem{thiru_itc31}
{\sc Vasantam, T. and Mazumdar, R.~R.} (2019).
\newblock Fluctuations around the mean-field for a large scale {E}rlang loss
  system under the {SQ(d)} load balancing.
\newblock In {\em Proceedings of the 31st International Teletraffic Congress
  (ITC 31)}.

\bibitem{Vvedenskaya_inftran}
{\sc Vvedenskaya, N.~D., Dobrushin, R.~L. and Karpelevich, F.~I.} (1996).
\newblock Queueing system with selection of the shortest of two queues: an
  asymptotic approach.
\newblock {\em Problems of Information Transmission\/} {\bf 32,} 20--34.

\bibitem{Whitt_Loss_heavytraffic}
{\sc {Whitt}, W.} (1984).
\newblock Heavy-traffic approximations for service systems with blocking.
\newblock {\em AT\& T Bell Laboratories Technical Journal\/} {\bf 63,}
  689--708.

\bibitem{xie}
{\sc Xie, Q., Dong, X., Lu, Y. and Srikant, R.} (2015).
\newblock Power of d choices for large-scale bin packing: A loss model.
\newblock In {\em Proceedings of the 2015 ACM SIGMETRICS}.
\newblock pp.~321--334.

\bibitem{Ying}
{\sc Ying, L.} (2016).
\newblock On the approximation error of mean-field models.
\newblock In {\em Proceedings of the 2016 ACM SIGMETRICS International
  Conference on Measurement and Modeling of Computer Science}.
\newblock SIGMETRICS '16.
\newblock ACM, New York, NY, USA.
\newblock pp.~285--297.

\end{thebibliography}
\end{document}